\documentclass[final,journal,twoside,twocolumn,10pt]{IEEEtran}
\usepackage{acronym}
\usepackage{algorithm}
\usepackage{algpseudocode}
\usepackage{amsfonts}
\usepackage{amsmath}
\usepackage{amsthm}
\usepackage{graphicx}
\usepackage{hyperref}
\usepackage{multirow}
\usepackage{upgreek}
\usepackage{xcolor}

\newcommand{\bracket}[1]{\begin{bmatrix} #1 \end{bmatrix}}

\newcommand{\insertpdf}[1]{\includegraphics[scale=0.8]{#1}}
\newcommand{\mycaption}[2]{\caption[#1]{#1 #2}}
\newtheorem{theorem}{Theorem}

\newcommand{\ft}{\mathrm{ft}}
\newcommand{\m}{\mathrm{m}}
\newcommand{\SSE}{\mathit{SSE}}
\newcommand{\T}{\mathrm{T}}

\DeclareMathOperator{\sort}{sort}
\DeclareMathOperator{\tr}{tr}

\acrodef{1D}[1D]{one-dimensional}
\acrodef{2D}[2D]{two-dimensional}
\acrodef{3D}[3D]{three-dimensional}
\acrodef{API}{application programming interface}
\acrodef{BSX}{binary singleton-expansion}
\acrodef{CPD}{canonical polyadic decomposition}
\acrodef{CPU}{central processing unit}
\acrodef{CSC}{compressed sparse column}
\acrodef{GB}{Gr\"obner basis}
\acrodef{GPU}{graphics processing unit}
\acrodef{IDE}{integrated development environment}
\acrodef{LU}{lower- and upper-triangular}
\acrodef{MDA}{multidimensional array}
\acrodef{MEX}{MATLAB executable}
\acrodef{MV}{matrix-vector}
\acrodef{R2024}{Release~2024}
\acrodef{R2026}{Release~2026}
\acrodef{RREF}{row-reduced echelon form}
\acrodef{RRLU}{rank-revealing \ac{LU}}
\acrodef{QR}{unitary-triangular}
\acrodef{Qr}{any-degree unitary-triangular}
\acrodef{RT}{Ricci-notation tensor}
\acrodef{sci/tech}{scientific/technical}
\acrodef{SSE}{sum squared error}
\acrodefindefinite{SSE}{an}{a}
\acrodef{SVD}{singular value decomposition}
\acrodef{USX}{unary singleton-expansion}

\begin{document}


\title{Ricci-Notation Tensor Framework for \\Numerical Algebraic Geometry via Any-Degree Unitary-Triangular Factorization}

\author{Dileepan~Joseph (dil.joseph@ualberta.ca)\\
\vspace{1ex}
Department of Electrical and Computer Engineering\\
University of Alberta, Edmonton, AB, Canada\\
\vspace{1ex}
\today}

\date{}
\maketitle


\begin{abstract}
The unitary-triangular (QR) factorization of linear algebra may be used to robustly and efficiently solve a linear system. Toward a comparable numerical method to solve a polynomial system of higher degree, this paper proposes an \emph{any-degree} unitary-triangular (Qr) factorization, which for a degree-one system reduces to the QR factorization. The work develops a tensor framework, i.e., codesigned algebra and software, where polynomial system coefficients are represented by a vector-shaped sparse tensor, a multidimensional array whose number of Ricci-notation indices, called the tensor degree, equals the highest monomial degree of the system. With the proposed Qr factorization, the coefficient tensor decomposes into a product of unitary and triangular factors that, in general, also have Ricci-notation indices and sparse entries. The unitary factor defines a unitary transform, a generalization of the related linear algebra concept to tensor algebra, that can triangularize a polynomial system while preserving its solution set, whether zero- or positive-dimensional. The work extends the author’s Ricci-notation tensor framework, providing new algebra and new software to model, construct, and factorize polynomial systems in this manner. After applying the approach to numerically triangularize two zero-dimensional systems, chosen for educational value, results are compared to the Gr\"obner-basis (GB) method for triangularizing polynomial systems symbolically. One problem is of degree three, with three equations and unknowns, and the other of degree two, with four equations and unknowns. Although it resembles GB triangularization, the proposed Qr factorization has a completely different pedigree associated with numerical methods.
\end{abstract}


\section{Introduction}
\label{sec:introduction}

Algebraic geometry concerns the algebraic set, solution set, or zero set of a given polynomial system of equations. Unlike symbolic frameworks, numeric ones embrace approximations. Homotopy continuation, a popular numerical approach, constructs a reference system having known zeros and related to the given system. The reference and its zeros map, following a real parameter going from a start to an end value, to the given system and its zeros. In detailed literature reviews, Hauenstein and Sommese \cite{Hauenstein2017a} and Hauenstein and Wampler \cite{Hauenstein2017b} elaborate on key developments like witness sets that represent all zeros via intersections to specified linear spaces. Continuation can handle positive-dimensional zero sets, e.g., surfaces, curves, and points, in addition to zero-dimensional ones, i.e., points alone. In the context of robot mechanisms, stability analysis, and model selection, Wampler and Plecnik \cite{Wampler2025}, Menini \emph{et al.} \cite{Menini2023}, and Gross \emph{et al.} \cite{Gross2016} elaborate on continuation.

Another approach to numerical algebraic geometry, which Vanderstukken and De~Lathauwer \cite{Vanderstukken2021a} review and improve, features tensor factorization. Polynomial system coefficients map to nonzero entries of a large and sparse Macaulay matrix. Its null space yields a basis that contains the zero set of the given system. The approach converts the basis to a tensor and applies \ac{CPD}, a generalization of \ac{SVD} from \ac{MV} to tensor algebra, along with post-processing to compute and extract all valid zeros as vectors. Ishteva and Dreesen \cite{Ishteva2022} focus on degree-two systems and a partly-symmetric tensor model, yielding a paper of educational value that follows a similar approach. Vanderstukken \emph{et al.} \cite{Vanderstukken2021b} generalize the Macaulay-\ac{CPD} approach beyond zero-dimensional solution sets.

Reviews and developments concerning \ac{CPD} inform numerical algebraic geometry even when motivated by different objectives. Addressing model selection of dynamical systems from measured data, Batselier \cite{Batselier2022} surveys tensor factorization literature, emphasizing \ac{CPD} over Tucker and tensor train decompositions because \ac{CPD} exhibits a rank revealing property. In the context of data analysis and a generalization of principal component analysis, Larsen and Kolda \cite{Larsen2022} accelerate the \ac{CPD} of large tensors, with relatively little loss of accuracy, by sampling nonzeros randomly and systematically.

Unlike the numerical approaches, the symbolic approach to solve a polynomial system yields a triangular system having the same solution set, whether zero- or positive-dimensional, along with other useful properties. Buchberger and Kauers \cite{Buchberger2010} summarize this reduced \ac{GB} approach, focusing for educational purposes on simple examples, some from \ac{sci/tech} applications like modular robotics. Faug\`ere and Mou \cite{Faugere2017} review \ac{GB} algorithms, developed over decades, before elaborating on a new way, for the zero-dimensional case, to convert an intermediate representation to the elimination-order one, called triangular here, such conversion being a bottleneck notable in cryptanalysis.

Reduced \ac{GB} implementations, like \texttt{gbasis} of MATLAB's Symbolic Math Toolbox \cite{MathWorks2025s}, allow polynomial systems with symbolic coefficients. After triangularization, one may enter numeric values into coefficients before numerically solving a sequence of univariate equations, one variable at a time, to obtain one or more solutions in the zero set. To solve a problem in modular robotics, Liu and Han \cite{Liu2021} compare continuation and \ac{GB} approaches. While the numerical approach was easier to apply, the symbolic-numeric hybrid computed faster with equivalent solutions for the linkages involved. Elumalai \emph{et al.} \cite{Elumalai2019} and Yang \emph{et al.} \cite{Yang2015} compare numerical optimization to hybrid optimization for problems, in geoscience and power electronics, respectively, definable by polynomial systems. Both works favoured the hybrid approach. Triangularization, via a reduced \ac{GB}, enabled global optimization.

This work introduces \iac{RT} framework to formulate and triangularize polynomial systems numerically. It proposes the \ac{Qr} factorization for numerical algebraic geometry, a tensor factorization that simplifies for linear systems to \ac{QR} factorization. The work thereby generalizes the unitary transforms of linear algebra. Round-off error aside, proposed transforms guarantee solution set invariance, whether zero- or positive-dimensional, regardless of system degree. Coefficients before and after triangularization, and the unitary factors that define the unitary transforms, are all sparse tensors associated with a certain number, called the degree, of \ac{RT} indices.

In stages, Harrison \cite{Harrison2016a}, Harrison and Joseph \cite{Harrison2016b, Harrison2018}, and Joseph \cite{Joseph2024b} have developed, alongside \ac{RT} software, a dual-variant index notation with Einstein summation convention, the \ac{RT} algebra, for \emph{numeric} tensor purposes. This work develops the \ac{RT} framework for algebraic geometry, i.e., for \emph{geometric} tensor purposes. At present, the framework concerns neither the Ricci calculus nor differential geometry.

Other researchers who write about \ac{QR} factorization in the context of tensors and/or polynomial systems do so in very different ways from this work. Leng \emph{et al.} \cite{Leng2025} develop a block-based variation of \ac{QR} factorization to accelerate \ac{MV} operations that feature in deep learning when performed on \acp{GPU} with tensor cores. Similarly, Rohrig-Zollner \emph{et al.} \cite{RohrigZollner2022} develop \ac{QR} factorization variants to improve tensor trains decompositions when distributed over \ac{CPU} clusters. Meanwhile, for Tucker and truncated Tucker tensor decompositions, Kaloorazi \emph{et al.} \cite{Kaloorazi2024} and Beaupere \emph{et al.} \cite{Beaupere2023} develop \ac{QR}-based alternatives to \ac{SVD}-based rank reductions of matrices derived from large \acp{MDA}. For an overdetermined polynomial system, Govindarajan \emph{et al.} \cite{Govindarajan2024} compare \ac{QR} factorization with column pivoting and \ac{LU} factorization with full pivoting, selecting the latter, to compute the null space of a Macaulay matrix.

The proposed \ac{RT} framework includes tensor software to realize, with programmatic and computational efficiency in MATLAB on \acp{CPU}, sparse models expressed with the \ac{RT} algebra. Others have codesigned software with tensor algebra to represent and manipulate sparse tensors in MATLAB. For \ac{CPD} and other \ac{SVD}-like decompositions expressed with an $n$-mode notation, Bader and Kolda \cite{Bader2007} and Sorber \emph{et al.} \cite{Sorber2015} contributed the Tensor Toolbox and Tensorlab, respectively. In their GenTen library for \ac{CPD}, Phipps and Kolda \cite{Phipps2019} exploited parallel computing while revisiting a key Tensor Toolbox function for matricized tensor times Khatri-Rao products.

Recent research on the acceleration of sparse tensor operations spans high- to low-level approaches. Bik \emph{et al.} \cite{Bik2022} develop a compiler to abstract away details while supporting a variety of index and nonzero value formats. The best choice would be architecture dependent. Scheffler \emph{et al.} \cite{Scheffler2023} develop \iac{CPU} architecture with special registers and assembly language instructions to handle the streaming by index of nonzero values from cache and other memory into arithmetic units.

Tensor software of the \ac{RT} framework means the RTToolbox \cite{Joseph2024b}, featuring \texttt{tensor} and \texttt{index} classes, plus MATLAB. Codesigned with \ac{RT} algebraic geometry, this work extends the RTToolbox to support sparse tensors, replacing \ac{R2024} \cite{Joseph2024a} with the accompanying \ac{R2026}. Where possible, the design leverages vectorized operations and compiled functions of the MATLAB kernel. Extensions build on sparse \ac{MDA} representation as \ac{1D}, sometimes \ac{2D}, kernel sparse arrays.

The \ac{RT} framework realizes a sparse tensor by constructing a \texttt{tensor} object using a sparse \ac{MDA} object, defined by a \texttt{sparse1} class developed for the RTToolbox, and an \texttt{index} object. New functions and methods for the toolbox and its \texttt{tensor} class support sparsity and \ac{Qr} factorization. These include \texttt{spmex1}, \iac{MEX} function. Compilable upon RTToolbox installation, \texttt{spmex1} accelerates null-space and \ac{BSX} operations on sparse \ac{2D} arrays in the \ac{CSC} format MATLAB uses. A mid-level approach to acceleration, the function exploits a C language \ac{API} \cite{MathWorks2025c} to compute a precursor to a null space, \iac{RREF}, via \iac{RRLU} factorization. Not computable via MATLAB's \texttt{lu} function, the approach outperforms MATLAB alternatives. Compute-bound \ac{BSX} operations generalize Khatri-Rao products and replace memory-bound alternatives possible via MATLAB \ac{USX} operations.

Concept demos included with the RTToolbox, \ac{R2026}, offer examples chosen for educational value. These include a box dimensions problem, understandable even by high school students, that may be triangularized via \ac{Qr} factorization without internal iteration. A second demo, requiring iteration, applies the \ac{Qr} factorization to a two-segment robotic arm problem, understandable by university \ac{sci/tech} students. Both demos compare to a reduced \ac{GB} approach using \texttt{gbasis}.

The rest of this paper has four sections. Section~\ref{sec:tensor algebra} presents the tensor algebra this work adds to the \ac{RT} framework, in particular a model and method to represent and triangularize a polynomial system. Section~\ref{sec:concept demos} presents two concept demos, called the box dimensions and two-segment arm problems, respectively. Section~\ref{sec:tensor software} presents the RTToolbox, \ac{R2026}, emphasizing contributions of this second release in comparison to the initial one, \ac{R2024}. With the benefit of details from the preceding sections, Section~\ref{sec:conclusions} summarizes this work.

\acused{3D}
\acused{IDE}
\acused{SSE}


\section{Tensor Algebra}
\label{sec:tensor algebra}

This section introduces \ac{RT} algebraic geometry, in particular unitary and triangular identities that define factors of \iac{Qr} factorization, as well as canonical and other representations to specify polynomial systems and prove zero-set invariance upon unitary transformation. The section also presents a numerical method to compute the unitary and triangular factors.

\subsection{Form \& Factorization}

Table~\ref{tab:equations} categorizes systems of equations along two axes. One axis divides systems into linear, polynomial, and nonlinear. The other axis divides systems by the number of variables. Although linear system models are special cases of polynomial and nonlinear system models, specialized solution methods exist for the former. Explanations of these methods, e.g., in university \ac{sci/tech} courses, arguably benefit from the use of specialized models. Theoretically, one could model a polynomial system with a nonlinear system model.

\begin{table}[t]
\centering
\mycaption{Models and methods to solve systems of equations.}{For multivariate polynomial systems, this work introduces a \acf{RT} model and an \acf{Qr} factorization method. The work builds on published \acs{RT} algebra fundamentals \cite{Joseph2024b}. Table adapted from Harrison's Ph.D. thesis \cite{Harrison2016a}.}
\label{tab:equations}
\vspace{1ex}
\begin{tabular}{|c|c|c|}
\hline
Type & Univariate & Multivariate \\
\hline
\hline
\multirow{2}{*}{Linear} & $a x = b$ & $\mathbf{A} \mathbf{x} = \mathbf{b}$ \\
& Scalar division & LU factorization, etc. \\
\hline
\multirow{2}{*}{Polynomial} & $a_n x^n + \cdots a_1 x + a_0 = 0$ & $\mathbf{a}_\mathbf{j} \Pi^\mathbf{j}(\mathbf{x}, 1) = \mathbf{0}$ \\
& Companion matrix, etc. & \acs{Qr} factorization, etc. \\
\hline
\multirow{2}{*}{Nonlinear} & $f(x) = 0$ & $\mathbf{f}(\mathbf{x}) = \mathbf{0}$ \\
& Brent's method, etc. & Newton's method, etc. \\
\hline
\end{tabular}
\end{table}

In proposing the \ac{Qr} factorization method to triangularize a multivariate polynomial system, this work also proposes and manipulates models using the \ac{RT} algebra, a tensor algebra having a dual-variant index notation. Consider a polynomial system of $M$ equations and $N$ scalar variables, with a highest monomial degree of $D$. This work models such a system using one coefficient tensor, $\mathbf{a}$, and one unknown variable, $\mathbf{x}$, shaped as $M \times 1$ and $N \times 1$ vectors, in a canonical form:
\begin{align}
\mathbf{a}_\mathbf{j} \Pi^\mathbf{j}(\mathbf{x}, 1) &= \mathbf{0} \text{.}
\end{align}

Along with row/column dimension sizes, the coefficient tensor, $\mathbf{a}$, has $D$ subscript or true-variant indices, each of $N + 1$ dimension size, expressed as a vector index, $\mathbf{j}$. The \ac{RT} algebra distinguishes true/false-variant indices from row/column indices and calls such a tensor a degree-$D$ column vector, making the unknown variable, $\mathbf{x}$, a degree-zero column vector. The function, $\Pi(\mathbf{x}, 1)$, models a scalar, $\Pi$, with $D$ superscript or false-variant indices. Where possible, inline math omits tensor indices and context implies dimension sizes.

With one argument, the degree-$D$ scalar, $\Pi$, models an outer product of the argument with itself, $D$ times, after reshaping from an $N \times 1$ degree-zero form, $\mathbf{x}$, to a $1 \times 1$ degree-one form, $x$, using a term, reshape, found in \ac{MDA} software:
\begin{align}
\Pi^\mathbf{j}(\mathbf{x}) &= x^{j_1} x^{j_2} \cdots x^{j_D} \text{.}
\end{align}
As shown, the algebra models outer products via differing indices. An overloaded two-argument form of what this work calls the monomial-values scalar, $\Pi$, equals the one-argument form where the given arguments concatenate vertically:
\begin{align}
\Pi^\mathbf{j}(\mathbf{x}, \mathbf{y}) &= \Pi^\mathbf{j} \left( \bracket{\mathbf{x} \\ \mathbf{y}} \right) \text{.}
\end{align}

Generalizing the monomial-values scalar, $\Pi$, one can model a system, $\mathbf{a} \Pi$, with coefficients, $\mathbf{a}$, and multiple scalar variables or unknowns, $x$, $y$, $z$, etc., in canonical form easily:
\begin{align}
\mathbf{a}_\mathbf{j} \Pi^\mathbf{j}(x, y, z, 1) &= \mathbf{0} \text{.}
\end{align}
Repeated indices of mixed variant, subscript and superscript, express inner products, i.e., summations, along those indices. Repeated indices of the same variants, also allowed in the \ac{RT} algebra, express entrywise products along those indices.

The canonical form of a system, $\mathbf{a} \Pi$, in multiple unknowns, $x_1$, $x_2$, etc., does not prevent summation of equivalent monomials. However, any most-sparse representation does require asymmetry, e.g., $2 x_1 x_2$ instead of $x_1 x_2 + x_2 x_1$. A product, $\mathbf{a} \varsigma$, serves to model one most-sparse representation:
\begin{align}
\mathbf{a}_\mathbf{j} &= \mathbf{a}_\mathbf{i} \varsigma_\mathbf{j}^\mathbf{i} \text{.}
\end{align}
Added to the \ac{RT} algebra, the variant sigma symbol, $\varsigma$, is a mixed-variant scalar involving a $\sort$ function to sort index values in ascending order (largest value to the right), an order that does matter when modeling upper triangularity:
\begin{align}
\varsigma_\mathbf{j}^\mathbf{i} &\equiv
\begin{cases}
1, & \mathbf{j} = \sort(\mathbf{i}) \text{,} \\
0, & \text{otherwise} \text{.}
\end{cases}
\end{align}

The \ac{RT} algebra models upper-triangular coefficients, $\mathbf{r}$, using degree-one column and row vectors, $\mathbf{e}$ and $\mathbf{e}^\T$, basis operators that define reshape operations, representable as sparse multiplication, between row/column and tensor indices:
\begin{align}
\mathbf{r}_\mathbf{k} &= \mathbf{e}^m \mathbf{e}_n^\T \mathbf{r}_\mathbf{j} \varsigma_{m\mathbf{k}}^{n\mathbf{j}} \text{.}
\end{align}
Nonzero entries, always one, of either operator, $\mathbf{e}$ or $\mathbf{e}^\T$, occur if and only if the row or column index value equals the tensor index value. A system of $M$ equations and $N$ unknowns has an $M \times 1$ coefficient vector, $\mathbf{r}$, so basis operators have row/column and index values, $m$ and $n$, that range from $1$ to $M$ here. The model applies to systems, $\mathbf{r} \Pi$, whether or not $M$ equals $N$. Indices in the index vectors, $\mathbf{k}$ and $\mathbf{j}$, each range from $1$ to $N + 1$, given augmentation of unknowns, $\mathbf{x}$, by one.

In the case of a linear system, the \ac{Qr} factorization simplifies to the \ac{QR} factorization of linear algebra. This degree-one case, upper-triangular by convention, clarifies the end augmentation of unknowns and ascending sort of indices in the any-degree case. Converting single tensor indices of vector-shaped tensors to column indices yields canonical and triangular forms:
\begin{align}
\bracket{\mathbf{A} & -\mathbf{b}} \bracket{\mathbf{x} \\ 1} &= \mathbf{0} \text{,} \\
\bracket{\mathbf{R} & \mathbf{r}} \bracket{\mathbf{x} \\ 1} &= \mathbf{0} \text{,}
\end{align}
where, via \ac{QR} factorization of an $M \times (N + 1)$ matrix,
\begin{align}
\bracket{\mathbf{R} & \mathbf{r}} &= \mathbf{Q}^* \bracket{\mathbf{A} & -\mathbf{b}} \text{,} \\
\bracket{\mathbf{A} & -\mathbf{b}} &= \mathbf{Q} \bracket{\mathbf{R} & \mathbf{r}} \text{.}
\end{align}
These models indicate two unitary transforms, $\mathbf{Q}^*$ and $\mathbf{Q}$, where the one with a conjugated factor effects the triangularization of interest, an observation that generalizes.

Consider a degree-$D$ system, $\mathbf{a} \Pi$, of $M$ equations and $N$ unknowns with equivalent monomials collected ($\mathbf{a}$ equals $\mathbf{a} \varsigma$). This work introduces a degree-$(D + \Delta D)$ triangular system, $\mathbf{r} \Pi$, derived by unitary transformation via a degree-$\Delta D$ unitary factor, $\mathbf{Q}_\text{u}$, as defined by the proposed \ac{Qr} factorization:
\begin{align}
\mathbf{r}_\mathbf{k} &= (\mathbf{Q}_\text{u}^\mathbf{i})^* \mathbf{a}_\mathbf{j} \varsigma_\mathbf{k}^{\mathbf{i}\mathbf{j}} \text{,} \\
\mathbf{a}_\mathbf{j} &= \mathbf{Q}_\mathbf{h}^\text{v} \mathbf{r}_\mathbf{k} \varsigma_{\mathbf{j}\mathbf{0}}^{\mathbf{h}\mathbf{k}} \text{.}
\end{align}
The model involves three factors: two $M \times M$ matrices, $\mathbf{Q}_\text{u}$ and $\mathbf{Q}^\text{v}$, of degree $\Delta D$; and an $M \times 1$ upper-triangular vector, $\mathbf{r}$, of degree $D + \Delta D$. In the \ac{RT} algebra, indices enclosed in a (conjugate) transposition change variants. Forward and backward unitary factors, $\mathbf{Q}_\text{u}$ and $\mathbf{Q}^\text{v}$, equate to each other in the degree-one case, where one symbol, $\mathbf{Q}$, suffices.

To simplify notation related to triangular coefficients, this work introduces a shorthand where the zero index indicates an end value specified usually by context, e.g., $N + 1$, and the zero vector index indicates a vector of end values, i.e.,
\begin{align}
\mathbf{r}_\mathbf{k} \varsigma_{\mathbf{j}\underline{\mathbf{0}}}^{\mathbf{h}\mathbf{k}} &\equiv \mathbf{r}_\mathbf{k} \varsigma_{\mathbf{j}\underline{\mathbf{1}(N + 1)}}^{\mathbf{h}\mathbf{k}} \text{.}
\end{align}

Unitary factors, $\mathbf{Q}_\text{u}$ and $\mathbf{Q}^\text{v}$, have $\Delta D$ tensor indices. This work treats the number, the incremental degree, as a parameter of the \ac{Qr} factorization that a user specifies to triangularize a given degree-$D$ system. Regardless of incremental degree and triangularization, unitary factors must obey two identities:
\begin{align}
(\mathbf{Q}_\text{u}^\mathbf{0})^* \mathbf{Q}_\text{u}^\mathbf{0} &= \mathbf{I} \text{,} \\
\mathbf{Q}_\mathbf{i}^\text{v} (\mathbf{Q}_\text{u}^\mathbf{j})^* \varsigma_\mathbf{k}^{\mathbf{i}\mathbf{j}} &= \mathbf{I} \delta_\mathbf{k}^\mathbf{0} \text{.}
\end{align}

The \ac{RT} algebra, as previously defined, includes a Dirac delta symbol, $\delta$, to express tensor index equivalence. Like the proposed variant sigma symbol, $\varsigma$, to collect equivalent system monomials, one may define it simply as a sparse tensor:
\begin{align}
\delta_\mathbf{j}^\mathbf{i} &\equiv
\begin{cases}
1, & \mathbf{j} = \mathbf{i} \text{,} \\
0, & \text{otherwise} \text{.}
\end{cases}
\end{align}

In the special case of zero incremental degree, a value that suffices to triangularize any linear system given the linear-algebra \ac{QR} theory, the two unitary identities simplify:
\begin{align}
(\mathbf{Q}_\text{u})^* \mathbf{Q}_\text{u} &= \mathbf{I} \text{,} \\
\mathbf{Q}^\text{v} (\mathbf{Q}_\text{u})^* &= \mathbf{I} \text{.}
\end{align}
Each factor must therefore equal the other. For the general case of nonzero incremental degree, a related observation applies. Consider an end-index slice of the second identity alone:
\begin{align}
\mathbf{Q}_\mathbf{i}^\text{v} (\mathbf{Q}_\text{u}^\mathbf{j})^* \varsigma_\mathbf{0}^{\mathbf{i}\mathbf{j}} &= \mathbf{I} \delta_\mathbf{0}^\mathbf{0} \text{.}
\end{align}
Simplifying yields an end-index identity, true for any degree, where an end-index slice of either factor equals the linear-algebra inverse of the conjugate transpose of the other:
\begin{align}
\mathbf{Q}_\mathbf{0}^\text{v} (\mathbf{Q}_\text{u}^\mathbf{0})^* &= \mathbf{I} \text{.}
\end{align}
Actually, considering the first unitary identity also, the end-index slice of each factor must always equal the other.

\subsection{Zero-Set Invariance}

Consider the zero set, $\mathcal{X}$, of a polynomial system, $\mathbf{a} \Pi$, having $M$ equations and $N$ unknowns. Each member of the set, $\mathbf{x} \in \mathcal{X}$, solves the system and no other solutions exist. Coefficients, $\mathbf{a}$, play a role in determining set size, related to rank in linear algebra. Simple relations, $M > N$, $M = N$, and $M < N$, do not define set size, whether empty, finite (i.e., zero-dimensional), or infinite (i.e., positive-dimensional).

Assuming a successful \ac{Qr} factorization of given system coefficients, $\mathbf{a}$, ideally the zero sets of given and triangular systems, $\mathbf{a} \Pi$ and $\mathbf{r} \Pi$, equate. Two theorems claim that a zero of the given system implies a zero of the triangular system and vice versa. Whereas the \ac{Qr} factorization adds a triangularity requirement, the theorem proofs depend only on properties of the unitary factors, $\mathbf{Q}_\text{u}$ and $\mathbf{Q}^\text{v}$, which define a pair of unitary transforms that produce either system from the other.

\begin{theorem}[Forward invariance theorem]
The zero set of a triangular system, $\mathbf{r} \Pi$, contains the zero set of a given system, $\mathbf{a} \Pi$, where coefficients transform via a forward unitary factor, $\mathbf{Q}_\text{u}$, obtained by \ac{Qr} factorization of given coefficients, $\mathbf{a}$:
\begin{align*}
\mathbf{r}_\mathbf{k} &= (\mathbf{Q}_\text{u}^\mathbf{i})^* \mathbf{a}_\mathbf{j} \varsigma_\mathbf{k}^{\mathbf{i}\mathbf{j}} \text{.}
\end{align*}
\end{theorem}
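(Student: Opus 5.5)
Take an arbitrary zero $\mathbf{x} \in \mathcal{X}$ of the given system, so $\mathbf{a}_\mathbf{j} \Pi^\mathbf{j}(\mathbf{x}, 1) = \mathbf{0}$. The aim is to show $\mathbf{r}_\mathbf{k} \Pi^\mathbf{k}(\mathbf{x}, 1) = \mathbf{0}$. The argument only uses how the forward transform acts on monomial values; triangularity of $\mathbf{r}$ plays no role.

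First, substitute the forward transform into the triangular system:
\begin{align*}
\mathbf{r}_\mathbf{k} \Pi^\mathbf{k}(\mathbf{x}, 1) &= (\mathbf{Q}_\text{u}^\mathbf{i})^* \mathbf{a}_\mathbf{j} \varsigma_\mathbf{k}^{\mathbf{i}\mathbf{j}} \Pi^\mathbf{k}(\mathbf{x}, 1) \text{.}
\end{align*}

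Second, establish a lemma on how the variant sigma symbol contracts with the monomial-values scalar. Write $\mathbf{y} = [\mathbf{x}; 1]$. The entry $\Pi^\mathbf{k}(\mathbf{x},1) = y^{k_1} \cdots y^{k_{D+\Delta D}}$ is a product of commuting scalars, so it is invariant under permutation of $\mathbf{k}$. Hence
\begin{align*}
\varsigma_\mathbf{k}^{\mathbf{i}\mathbf{j}} \Pi^\mathbf{k}(\mathbf{x}, 1) &= \Pi^{\sort(\mathbf{i}\mathbf{j})}(\mathbf{x}, 1) = \Pi^{\mathbf{i}\mathbf{j}}(\mathbf{x}, 1) = \Pi^\mathbf{i}(\mathbf{x}, 1) \, \Pi^\mathbf{j}(\mathbf{x}, 1) \text{.}
\end{align*}
The first equality holds because, for each fixed $(\mathbf{i},\mathbf{j})$, exactly one $\mathbf{k}$ makes $\varsigma$ nonzero. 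The last equality is the outer-product definition of $\Pi$, which factors over concatenated index vectors.

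Third, reorder the commuting sums (the contraction over $\mathbf{k}$ is finite) to obtain
\begin{align*}
\mathbf{r}_\mathbf{k} \Pi^\mathbf{k}(\mathbf{x}, 1) &= (\mathbf{Q}_\text{u}^\mathbf{i})^* \Pi^\mathbf{i}(\mathbf{x}, 1) \left( \mathbf{a}_\mathbf{j} \Pi^\mathbf{j}(\mathbf{x}, 1) \right) \text{.}
\end{align*}
The parenthesized factor is the zero vector by hypothesis, so the whole expression vanishes. Therefore $\mathbf{x}$ is a zero of $\mathbf{r}\Pi$, which gives the stated containment. This is the tensor analogue of the linear-algebra fact that $\mathbf{Q}^* [\mathbf{A} \; -\mathbf{b}] [\mathbf{x}; 1] = \mathbf{Q}^* \mathbf{0}$.

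The main obstacle is bookkeeping in the RT algebra rather than any deep step. Conjugate transposition changes the variants of the indices $\mathbf{i}$, and it must be confirmed that this makes $\mathbf{i}$ a contracted, rather than entrywise, index against $\Pi^\mathbf{i}$. One must also check that the matrix product of the $M \times M$ factor $(\mathbf{Q}_\text{u})^*$ with the $M \times 1$ vector $\mathbf{a}$ commutes with the scalar tensor contractions. I would verify both points by writing every term as explicit sums over row, column and tensor index values. Finally, note that no unitary identity is needed for this direction; those identities are reserved for the backward theorem.
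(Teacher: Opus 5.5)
Your proof is correct and is essentially the paper's argument run in the opposite direction: the paper premultiplies the given system by the $\mathbf{x}$-dependent matrix $\mathbf{U} = (\mathbf{Q}_\text{u}^\mathbf{i})^* \Pi^\mathbf{i}(\mathbf{x}, 1)$, uses $\Pi^\mathbf{i}\Pi^\mathbf{j} = \Pi^{\mathbf{i}\mathbf{j}}$ to rewrite the result as an uncollected system $\mathbf{b}\Pi$ with $\mathbf{b}_{\mathbf{i}\mathbf{j}} = (\mathbf{Q}_\text{u}^\mathbf{i})^* \mathbf{a}_\mathbf{j}$, and then observes that $\mathbf{r} = \mathbf{b}\varsigma$ and that collecting equivalent monomials does not change the zero set. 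Your lemma $\varsigma_\mathbf{k}^{\mathbf{i}\mathbf{j}} \Pi^\mathbf{k}(\mathbf{x},1) = \Pi^{\mathbf{i}\mathbf{j}}(\mathbf{x},1)$, justified by the permutation symmetry of $\Pi$, just makes that last step explicit; as you note, and as the paper also states, neither unitary identity nor triangularity is needed.
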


\begin{proof}
Consider a degree-$D$ system, $\mathbf{a} \Pi$, that in canonical form equates a known $M \times 1$ vector function of an unknown $N \times 1$ vector, $\mathbf{x}$, to zero. Premultiplying both canonical sides by the same $M \times M$ matrix, $\mathbf{U}$, results in a transformed system, $\mathbf{U} \mathbf{a} \Pi$, whose zero set always contains the given one:
\begin{align}
\mathbf{U} \mathbf{a}_\mathbf{j} \Pi^\mathbf{j}(\mathbf{x}, 1) &= \mathbf{0} \text{.}
\end{align}

With a forward degree-$\Delta D$ factor, $\mathbf{Q}_\text{u}$, of given coefficients, $\mathbf{a}$, construct the premultiplier matrix, $\mathbf{U}$, as a forward transform, $\mathbf{Q}_\text{u}^* \Pi$, that depends polynomially on the vector, $\mathbf{x}$:
\begin{align}
\mathbf{U} &= (\mathbf{Q}_\text{u}^\mathbf{i})^* \Pi^\mathbf{i}(\mathbf{x}, 1) \text{.}
\end{align}
Put the transformed system, $\mathbf{U} \mathbf{a} \Pi$, into the canonical form of an equal system, $\mathbf{b} \Pi$, identifying a coefficient vector, $\mathbf{b}$:
\begin{align}
\mathbf{b}_{\mathbf{i}\mathbf{j}} \Pi^{\mathbf{i}\mathbf{j}}(\mathbf{x}, 1) &= \mathbf{0} \text{,} \\
\mathbf{b}_{\mathbf{i}\mathbf{j}} &= (\mathbf{Q}_\text{u}^\mathbf{i})^* \mathbf{a}_\mathbf{j} \text{,}
\end{align}
where, according to the monomial-values scalar function,
\begin{align}
\Pi^\mathbf{i}(\mathbf{x}, 1) \Pi^\mathbf{j}(\mathbf{x}, 1) &= \Pi^{\mathbf{i}\mathbf{j}}(\mathbf{x}, 1) \text{.}
\end{align}

The vector, $\mathbf{b}$, that defines the transformed system, $\mathbf{b} \Pi$, resembles the degree-$(D + \Delta D)$ vector, $\mathbf{r}$, that defines the triangular system, $\mathbf{r} \Pi$. The vectors equate upon collection of equivalent monomials, i.e., $\mathbf{r}$ equals $\mathbf{b} \varsigma$, an operation that does not alter the associated zero set. Thus, every zero of the given system, $\mathbf{a}$, implies a zero of the triangular system, $\mathbf{r}$.
\end{proof}

\begin{theorem}[Backward invariance theorem]
The zero set of a given system, $\mathbf{a} \Pi$, contains the zero set of a triangular system, $\mathbf{r} \Pi$, where coefficients transform via a backward unitary factor, $\mathbf{Q}^\text{v}$, obtained by \ac{Qr} factorization of given coefficients, $\mathbf{a}$:
\begin{align*}
\mathbf{a}_\mathbf{j} &= \mathbf{Q}_\mathbf{h}^\text{v} \mathbf{r}_\mathbf{k} \varsigma_{\mathbf{j}\mathbf{0}}^{\mathbf{h}\mathbf{k}} \text{.}
\end{align*}
\end{theorem}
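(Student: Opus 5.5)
The plan is to mirror the proof of the forward invariance theorem, replacing the forward premultiplier by a backward one, and to use the second unitary identity to show that the backward premultiplier undoes the forward one pointwise in $\mathbf{x}$. Fix an arbitrary zero $\mathbf{x}$ of the triangular system, so $\mathbf{r}_\mathbf{k} \Pi^\mathbf{k}(\mathbf{x}, 1) = \mathbf{0}$. As in the forward proof, define the $M \times M$ matrix
\begin{align*}
\mathbf{U} &= (\mathbf{Q}_\text{u}^\mathbf{i})^* \Pi^\mathbf{i}(\mathbf{x}, 1) \text{.}
\end{align*}
Define also the backward premultiplier
\begin{align*}
\mathbf{V} &= \mathbf{Q}_\mathbf{h}^\text{v} \Pi^\mathbf{h}(\mathbf{x}, 1) \text{.}
\end{align*}
Both matrices depend polynomially on $\mathbf{x}$ but are constant matrices once $\mathbf{x}$ is fixed.

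\textbf{Step 1.} Reuse the forward computation. The proof of the forward theorem shows that $\mathbf{r}_\mathbf{k} \Pi^\mathbf{k}(\mathbf{x},1) = \mathbf{U} \mathbf{a}_\mathbf{j} \Pi^\mathbf{j}(\mathbf{x},1)$ for every $\mathbf{x}$. This rests on two facts:
\begin{itemize}
\item $\Pi^\mathbf{i} \Pi^\mathbf{j} = \Pi^{\mathbf{i}\mathbf{j}}$;
\item contracting with $\varsigma$ does not change polynomial values, because $\Pi^{\mathbf{i}\mathbf{j}} = \Pi^{\sort(\mathbf{i}\mathbf{j})}$ by commutativity of the scalar factors $x^{j}$.
\end{itemize}

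\textbf{Step 2.} Show that $\mathbf{V}\mathbf{U} = \mathbf{I}$ for every $\mathbf{x}$. Expand
\begin{align*}
\mathbf{V}\mathbf{U} &= \mathbf{Q}_\mathbf{i}^\text{v} (\mathbf{Q}_\text{u}^\mathbf{j})^* \Pi^{\mathbf{i}\mathbf{j}}(\mathbf{x},1) \text{.}
\end{align*}
By the same symmetry of $\Pi$, this equals
\begin{align*}
\mathbf{Q}_\mathbf{i}^\text{v} (\mathbf{Q}_\text{u}^\mathbf{j})^* \varsigma_\mathbf{k}^{\mathbf{i}\mathbf{j}} \Pi^\mathbf{k}(\mathbf{x},1) \text{.}
\end{align*}
The second unitary identity rewrites this as $\mathbf{I}\,\delta_\mathbf{k}^\mathbf{0} \Pi^\mathbf{k}(\mathbf{x},1) = \mathbf{I}\,\Pi^\mathbf{0}(\mathbf{x},1)$. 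Since every index in $\mathbf{0}$ selects the augmented entry $1$, we have $\Pi^\mathbf{0}(\mathbf{x},1) = 1$, and so $\mathbf{V}\mathbf{U} = \mathbf{I}$.

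\textbf{Step 3.} Conclude. For the fixed zero $\mathbf{x}$,
\begin{align*}
\mathbf{a}\Pi = \mathbf{V}\mathbf{U}\mathbf{a}\Pi = \mathbf{V}\,\mathbf{r}\Pi = \mathbf{V}\mathbf{0} = \mathbf{0} \text{.}
\end{align*}
Hence every zero of $\mathbf{r}\Pi$ is a zero of $\mathbf{a}\Pi$, which is the claimed containment.

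\textbf{Consistency with the stated backward formula.} It remains to connect this argument to the formula $\mathbf{a}_\mathbf{j} = \mathbf{Q}_\mathbf{h}^\text{v} \mathbf{r}_\mathbf{k} \varsigma_{\mathbf{j}\mathbf{0}}^{\mathbf{h}\mathbf{k}}$ in the statement. A direct route would pad $\mathbf{j}$ with end indices, using $\Pi^\mathbf{j} = \Pi^{\mathbf{j}\mathbf{0}}$. It would then evaluate $\mathbf{a}_\mathbf{j}\Pi^\mathbf{j}$ as $\sum \mathbf{Q}^\text{v}_\mathbf{h}\mathbf{r}_\mathbf{k}\Pi^{\mathbf{h}\mathbf{k}}$, restricted to those pairs $(\mathbf{h},\mathbf{k})$ whose sorted form ends in $\Delta D$ end indices. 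That restriction is exactly why the direct route does not immediately give $\mathbf{V}\,\mathbf{r}\Pi$.

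I expect this bookkeeping to be the main obstacle. To avoid it, I would present the Step 1--3 argument as the proof, since it only uses the forward relation and the unitary identities that define the factorization. I would then remark that the stated backward relation is the coefficient-level form of $\mathbf{V}\mathbf{r}\Pi = \mathbf{a}\Pi$, with terms outside the retained $\varsigma$ pattern cancelling because of the unitary identities.
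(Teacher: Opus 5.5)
Your proposal is correct and follows the paper's proof essentially step for step. The paper likewise substitutes the forward relation $\mathbf{r} = \mathbf{Q}_\text{u}^* \mathbf{a} \varsigma$ rather than the displayed backward formula, uncollects to $\mathbf{U}\mathbf{a}\Pi$, premultiplies by $\mathbf{V} = \mathbf{Q}_\mathbf{h}^\text{v} \Pi^\mathbf{h}(\mathbf{x},1)$, recollects $\Pi^{\mathbf{h}\mathbf{i}}$ as $\varsigma_\mathbf{k}^{\mathbf{h}\mathbf{i}} \Pi^\mathbf{k}$, and applies the second unitary identity together with $\Pi^\mathbf{0}(\mathbf{x},1) = 1$; your pointwise identity $\mathbf{V}\mathbf{U} = \mathbf{I}$ is just a cleaner packaging of that computation, and your closing remark on the coefficient-level formula (which follows from the second identity and $\mathbf{a} = \mathbf{a}\varsigma$) is consistent but not needed.
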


\begin{proof}
To prove that every zero of the triangular system, $\mathbf{r} \Pi$, implies a zero of the given system, $\mathbf{a} \Pi$, start by replacing the triangular system coefficients, $\mathbf{r}$, by their values, $\mathbf{Q}_\text{u}^* \mathbf{a} \varsigma$:
\begin{align}
(\mathbf{Q}_\text{u}^\mathbf{i})^* \mathbf{a}_\mathbf{j} \varsigma_\mathbf{k}^{\mathbf{i}\mathbf{j}} \Pi^\mathbf{k}(\mathbf{x}, 1) &= \mathbf{0} \text{.}
\end{align}
Eliminate the collection of monomials to yield an uncollected system in canonical form, $\mathbf{Q}^* \mathbf{a} \Pi$, with identical zero set:
\begin{align}
(\mathbf{Q}_\text{u}^\mathbf{i})^* \mathbf{a}_\mathbf{j} \Pi^{\mathbf{i}\mathbf{j}}(\mathbf{x}, 1) &= \mathbf{0} \text{.}
\end{align}

Premultiply both sides of the uncollected equation by the same $M \times M$ matrix, $\mathbf{V}$, to identify a new system, $\mathbf{V} \mathbf{Q}_\text{u}^* \mathbf{a} \Pi$, whose zero set contains that of the triangular system, $\mathbf{r} \Pi$:
\begin{align}
\mathbf{V} (\mathbf{Q}_\text{u}^\mathbf{i})^* \mathbf{a}_\mathbf{j} \Pi^{\mathbf{i}\mathbf{j}}(\mathbf{x}, 1) &= \mathbf{0} \text{.}
\end{align}
Construct a premultiplier, $\mathbf{V}$, called a backward transform, $\mathbf{Q}^\text{v} \Pi$, using  the unknown vector, $\mathbf{x}$, and the backward degree-$\Delta D$ factor, $\mathbf{Q}^\text{v}$, associated with the forward factor, $\mathbf{Q}_\text{u}$:
\begin{align}
\mathbf{V} &= \mathbf{Q}_\mathbf{h}^\text{v} \Pi^\mathbf{h}(\mathbf{x}, 1) \text{.}
\end{align}

Rearrange scalars of the premultiplied system, $\mathbf{V} \mathbf{Q}_\text{u}^* \mathbf{a} \Pi$, to factor out the given system, $\mathbf{a} \Pi$, in the canonical form:
\begin{align}
\mathbf{Q}_\mathbf{h}^\text{v} (\mathbf{Q}_\text{u}^\mathbf{i})^* \Pi^{\mathbf{h}\mathbf{i}}(\mathbf{x}, 1) \, \mathbf{a}_\mathbf{j} \Pi^\mathbf{j}(\mathbf{x}, 1) &= \mathbf{0} \text{,}
\end{align}
where, according to the monomial-values scalar function,
\begin{align}
\Pi^\mathbf{h}(\mathbf{x}, 1) \, \Pi^{\mathbf{i}\mathbf{j}}(\mathbf{x}, 1) &= \Pi^{\mathbf{h}\mathbf{i}}(\mathbf{x}, 1) \, \Pi^\mathbf{j}(\mathbf{x}, 1) \text{.}
\end{align}
Replace one scalar, $\Pi$, with an equivalent, $\varsigma \Pi$, to further factor out the left side, $\mathbf{Q}^\text{v} \mathbf{Q}_\text{u}^* \varsigma$, of the second unitary identity:
\begin{align}
\mathbf{Q}_\mathbf{h}^\text{v} (\mathbf{Q}_\text{u}^\mathbf{i})^* \varsigma_\mathbf{k}^{\mathbf{h}\mathbf{i}} \Pi^\mathbf{k}(\mathbf{x}, 1) \, \mathbf{a}_\mathbf{j} \Pi^\mathbf{j}(\mathbf{x}, 1) &= \mathbf{0} \text{.}
\end{align}

Via the second unitary identity, eliminate unitary factors, $\mathbf{Q}^\text{v}$ and $\mathbf{Q}_\text{u}$, in the rearranged premultiplied system, $\mathbf{V} \mathbf{Q}_\text{u}^* \mathbf{a} \Pi$, whose zero set contains that of the triangular system, $\mathbf{r} \Pi$:
\begin{align}
\mathbf{I} \delta_\mathbf{k}^\mathbf{0} \Pi^\mathbf{k}(\mathbf{x}, 1) \, \mathbf{a}_\mathbf{j} \Pi^\mathbf{j}(\mathbf{x}, 1) &= \mathbf{0} \text{.}
\end{align}
The scalar product term, $\delta^\mathbf{0} \Pi$, equates to an end slice, $\Pi^\mathbf{0}$, of the monomial-values scalar, which in turn equates to one:
\begin{align}
\Pi^\mathbf{0}(\mathbf{x}, 1) &= 1 \text{.}
\end{align}
Thus, after simplification, the rearranged premultiplied system yields the given system, implying every zero of the triangular system, $\mathbf{r} \Pi$, must be a zero of the given system, $\mathbf{a} \Pi$.
\end{proof}

The backward invariance theorem requires only the second, two-factor, unitary identity and the forward invariance theorem requires neither identity. For a zero incremental degree, $\Delta D$, the second identity directly implies the first, one-factor, identity. Although not employed by a proposed null-space method, one can show that any solution to the two-factor identity may be transformed, notwithstanding round-off, into a solution that satisfies both identities. Thus, the one-factor identity constrains degrees of freedom to favour numerical stability while the two-factor identity guarantees zero-set invariance.

\subsection{Null-Space Method}

Divisible into parts, Algorithm~\ref{fig:nullspace} computes the \ac{Qr} factorization of a system, subject to parameters. The first part non-iteratively computes initial unitary factors using a weighted null-space basis to ensure triangularization, unitary identities aside. To satisfy unitary identities, the second part defines and performs an iterative optimization. The last part non-iteratively completes the forward unitary factor, given the null-space basis and weights, and computes the triangular factor.

\begin{algorithm}[t]
\mycaption{Null-space method to obtain \acs{Qr} factors.}{Given real coefficients, $\mathbf{a}$, of a degree-$D$ system, $\mathbf{a} \Pi$, an incremental degree parameter, $\Delta D$, and optimization parameters, $\mathrm{options}$, this algorithm initializes, optimizes, and finalizes degree-$\Delta D$ unitary factors, $\mathbf{Q}_\text{u}$ and $\mathbf{Q}^\text{v}$, to compute a degree-$(D + \Delta D)$ triangular factor, $\mathbf{r}$, of an equivalent triangular system, $\mathbf{r} \Pi$.}
\label{fig:nullspace}
\begin{algorithmic}
\State $A_{o\mathbf{k}}^{mn\mathbf{h}} \Leftarrow (\mathbf{e}_m)^\T \mathbf{a}_\mathbf{j} (\delta_o^n \varsigma_\mathbf{k}^{\mathbf{h}\mathbf{i}} - \varsigma_{o\mathbf{k}}^{n\mathbf{h}\mathbf{i}})$
\State $\mathbf{A} \Leftarrow \mathbf{e}_{mn\mathbf{h}} A_{o\mathbf{k}}^{mn\mathbf{h}} (\mathbf{e}_{o\mathbf{k}})^\T$ \Comment{``zero''-cost reshape}
\State $\mathbf{Z} \Leftarrow \mathrm{null}(\mathbf{A}^\T)$ \Comment{i.e., where $\mathbf{A}^\T \mathbf{Z} \approx \mathbf{0}$}
\State $\mathbf{Z}^{\mathbf{h}j} \Leftarrow \mathbf{e}_m ((\mathbf{e}_{mn\mathbf{h}})^\T \mathbf{Z} \mathbf{e}^\mathbf{j}) \mathbf{e}_n^\T$ \Comment{``zero''-cost reshapes}
\State $y_j \Leftarrow \mathbf{Z}^{\mathbf{0}j} \backslash \mathbf{I}$
\State $\mathbf{Q}_\mathbf{h}^\text{v} \Leftarrow (\mathbf{I} \delta_\mathbf{k}^\mathbf{0}) / ((\mathbf{Z}^{\mathbf{i}j} y_j)^\T \varsigma_\mathbf{k}^{\mathbf{h}\mathbf{i}})$
\If{$\mathrm{options}.\mathrm{MaxIterations} > 0$}
\State $\mathbf{E}_\mathbf{k}^\text{v} \Leftarrow @(y, \mathbf{Q}^\text{v})\; \mathbf{Q}_\mathbf{h}^\text{v} (\mathbf{Z}^{\mathbf{i}j} y_j)^\T \varsigma_\mathbf{k}^{\mathbf{h}\mathbf{i}} - \mathbf{I} \delta_\mathbf{k}^\mathbf{0}$
\State $\mathbf{E}_\text{u} \Leftarrow @(y)\; (\mathbf{Z}^{\mathbf{0}j} y_j)^\T \mathbf{Z}^{\mathbf{0}j} y_j - \mathbf{I}$
\State $\SSE^\text{v} \Leftarrow @(y, \mathbf{Q}^\text{v})\; \tr\, (\mathbf{E}_\mathbf{k}^\text{v}(y, \mathbf{Q}^\text{v}))^\T \mathbf{E}_\mathbf{k}^\text{v}(y, \mathbf{Q}^\text{v})$
\State $\SSE_\text{u} \Leftarrow @(y)\; \tr\, (\mathbf{E}_\text{u}(y))^\T \mathbf{E}_\text{u}(y)$
\State $f \Leftarrow @(y, \mathbf{Q}^\text{v})\; \SSE^\text{v}(y, \mathbf{Q}^\text{v}) + \SSE_\text{u}(y)$
\State $G_\text{u}^j \Leftarrow @(y, \mathbf{Q}^\text{v})\; \partial f(y, \mathbf{Q}^\text{v}) / \partial y_j$
\State $G_{mn}^{\text{v}\mathbf{h}} \Leftarrow @(y, \mathbf{Q}^\text{v})\; \partial f(y, \mathbf{Q}^\text{v}) / \partial Q_\mathbf{h}^{\text{v}mn}$
\State $[y, \mathbf{Q}^\text{v}] \Leftarrow \mathrm{fminunc}(f, G_\text{u}, G^\text{v}, y, \mathbf{Q}^\text{v}, \mathrm{options})$
\EndIf
\State $\mathbf{Q}_\text{u}^\mathbf{i} \Leftarrow \mathbf{Z}^{\mathbf{i}j} y_j$
\State $\mathbf{r}_\mathbf{k} \Leftarrow (\mathbf{Q}_\text{u}^\mathbf{i})^* \mathbf{a}_\mathbf{j} \varsigma_\mathbf{k}^{\mathbf{i}\mathbf{j}}$
\end{algorithmic}
\end{algorithm}

Setting aside the unitary factors, the triangular factor, $\mathbf{r}$, if known specifies the triangular system. Using basis vectors, $\mathbf{e}$ and $\mathbf{e}^\T$, to convert between row and index values, one may express the triangular constraints in a homogeneous form:
\begin{align}
\mathbf{r}_\mathbf{k} - \mathbf{e}^\ell(\mathbf{e}_n^\T \mathbf{r}_\mathbf{j} \varsigma_{\ell\mathbf{k}}^{n\mathbf{j}}) &= \mathbf{0}_\mathbf{k} \text{,}
\end{align}
where the variant sigma symbol, $\varsigma$, applies in part to converted row index values. The unknown, $\mathbf{r}$, factors out after introducing a Dirac delta symbol, $\delta$, for index vector substitution:
\begin{align}
(\mathbf{I} \delta_\mathbf{k}^\mathbf{j} - \mathbf{e}^\ell \mathbf{e}_n^\T \varsigma_{\ell\mathbf{k}}^{n\mathbf{j}}) \mathbf{r}_\mathbf{j} &= \mathbf{0}_\mathbf{k} \text{.}
\end{align}
Replacing the unknown vector, $\mathbf{r}$, by its definition brings the known or given vector, $\mathbf{a}$, into the homogeneous equation:
\begin{align}
(\mathbf{I} \delta_\mathbf{k}^\mathbf{j} - \mathbf{e}^\ell \mathbf{e}_n^\T \varsigma_{\ell\mathbf{k}}^{n\mathbf{j}}) (\mathbf{Q}_\text{u}^\mathbf{h})^* \mathbf{a}_\mathbf{i} \varsigma_\mathbf{j}^{\mathbf{h}\mathbf{i}} &= \mathbf{0}_\mathbf{k} \text{,}
\end{align}
where the forward unitary factor, $\mathbf{Q}_\text{u}$, is the only unknown given the incremental parameter, $\Delta D$, specifying its degree.

All matrices and vectors in the homogeneous equation have equivalent scalars of higher degree, where row and column index values convert to tensor index values consistently:
\begin{align}
(\delta_o^p \delta_\mathbf{k}^\mathbf{j} - \delta_o^\ell \delta_n^p \varsigma_{\ell\mathbf{k}}^{n\mathbf{j}}) (Q_\text{u}^{mp\mathbf{h}})^* a_\mathbf{i}^m \varsigma_\mathbf{j}^{\mathbf{h}\mathbf{i}} &= 0_{o\mathbf{k}} \text{,}
\end{align}
One may rearrange the terms in a scalar product expression arbitrarily, in this case to factor out the only unknown, $Q_\text{u}$:
\begin{align}
(Q_\text{u}^{mp\mathbf{h}})^* \delta_n^p a_\mathbf{i}^m (\delta_o^n \delta_\mathbf{k}^\mathbf{j} - \delta_o^\ell \varsigma_{\ell\mathbf{k}}^{n\mathbf{j}}) \varsigma_\mathbf{j}^{\mathbf{h}\mathbf{i}} &= 0_{o\mathbf{k}} \text{,}
\end{align}
Thus, one may summarize the scalar equivalent of the homogeneous equation using a scalar linear function, $A$, of the given vector, $\mathbf{a}$, reshaped into a higher-degree scalar, $a$:
\begin{align}
(Q_\text{u}^{mn\mathbf{h}})^* A_{o\mathbf{k}}^{mn\mathbf{h}} &= 0_{o\mathbf{k}} \text{,} \\
a_\mathbf{i}^m (\delta_o^m \varsigma_\mathbf{k}^{\mathbf{h}\mathbf{i}} - \varsigma_{o\mathbf{k}}^{n\mathbf{h}\mathbf{i}}) &= A_{o\mathbf{k}}^{mn\mathbf{h}} \text{.}
\end{align}

Consequently, the forward unitary factor, $Q_\text{u}$, lies in a null space. It equals a weighted sum of a null-space basis, $Z$, with unknown weights, $y$, both of which require computation:
\begin{align}
Q_\text{u}^{mn\mathbf{h}} &= Z^{mn\mathbf{h}j} y_j \text{.}
\end{align}
The basis depends on the known scalar, $A$. Conjugation of the null-space relationship yields an unconjugated basis, $Z$:
\begin{align}
(A_{o\mathbf{k}}^{mn\mathbf{h}})^* Z^{mn\mathbf{h}j} &= 0_{o\mathbf{k}}^j \text{.}
\end{align}
Algorithm~\ref{fig:nullspace} employs a ``zero''-cost reshape and a conjugate transpose to convert the high-degree scalar, $A$, into a degree-zero matrix, $\mathbf{A}^*$, from which a linear algebra operation yields a null-space basis, $\mathbf{Z}$, that undergoes a further reshape, considering the desired final shape of the unitary factor, $\mathbf{Q}_\text{u}$:
\begin{align}
\mathbf{Q}_\text{u}^\mathbf{i} &= \mathbf{Z}^{\mathbf{i}j} y_j \text{.}
\end{align}

The first unitary identity requires a nonsingular degree-zero end slice, $\mathbf{Q}_\text{u}^\mathbf{0}$, of the degree-$\Delta D$ factor, $\mathbf{Q}_\text{u}$. Assuming an end slice, $\mathbf{Z}^\mathbf{0}$, of the null-space basis, $\mathbf{Z}$, has sufficient rank, one may construct and approximately solve a linear system to initialize the basis weights, $y$, a degree-one scalar here:
\begin{align}
\mathbf{I} &= \mathbf{Z}^{\mathbf{0}j} y_j \text{.}
\end{align}
The \ac{RT} framework, algebra and codesigned software, allows linear system solving via right- or left-division expressions:
\begin{align}
y_j &= \mathbf{Z}^{\mathbf{0}j} \backslash \mathbf{I} \text{.}
\end{align}
Along with initialized weights, the second part of Algorithm~\ref{fig:nullspace} requires an initialized backward factor, $\mathbf{Q}^\text{v}$. The latter follows from a division derived from the second unitary identity:
\begin{align}
\mathbf{Q}_\mathbf{h}^\text{v} &= (\mathbf{I} \delta_\mathbf{k}^\mathbf{0}) / ((\mathbf{Z}^{\mathbf{i}j} y_j)^* \varsigma_\mathbf{k}^{\mathbf{h}\mathbf{i}}) \text{.}
\end{align}

The null-space model, $\mathbf{Z} y$, of the forward unitary factor, $\mathbf{Q}_\text{u}$, ensures the transformed system, $\mathbf{Q}_\text{u}^* \mathbf{a} \varsigma$, satisfies triangularity constraints. Apart from the basis weights, $y$, a complete \ac{Qr} factorization requires the backward unitary factor, $\mathbf{Q}^\text{v}$, where unitary factors zero forward and backward identity errors:
\begin{align}
\mathbf{E}_\text{u} &= (\mathbf{Q}_\text{u}^\mathbf{0})^* \mathbf{Q}_\text{u}^\mathbf{0} - \mathbf{I} \text{,} \\
\mathbf{E}_\mathbf{k}^\text{v} &= \mathbf{Q}_\mathbf{h}^\text{v} (\mathbf{Q}_\text{u}^\mathbf{i})^* \varsigma_\mathbf{k}^{\mathbf{h}\mathbf{i}} - \mathbf{I}_\mathbf{k}^\mathbf{0} \text{.}
\end{align}

One approach, simple to develop, defines and minimizes to zero, if possible, a nonnegative and real objective function, $f$, equal to the \ac{SSE} over forward and backward identities:
\begin{align}
f &= \underbrace{\tr\, (\mathbf{E}_\text{u})^* \mathbf{E}_\text{u}}_{\SSE_\text{u}} + \underbrace{\tr\, (\mathbf{E}_\mathbf{k}^\text{v})^* \mathbf{E}_\mathbf{k}^\text{v}}_{\SSE^\text{v}} \text{.}
\end{align}
For practical reasons, doing so requires at least first derivatives, i.e., gradients, with respect to the variables, $y$ and $\mathbf{Q}^\text{v}$. With complex-valued coefficients, $\mathbf{a}$, this approach would require separate treatment of real and imaginary parts. In this initial treatment, the optimization handles only real coefficients.

As pseudocode, Algorithm~\ref{fig:nullspace} mixes \ac{RT} algebra expressions with MATLAB syntax. Its second part defines function handles for forward and backward errors, $\mathbf{E}_\text{u}$ and $\mathbf{E}^\text{v}$, their \ac{SSE} counterparts, $\SSE_\text{u}$ and $\SSE^\text{v}$, the scalar objective, $f$, and its gradients, $G_\text{u}$ and $G^\text{v}$, which follow using the algebra:
\begin{align}
\underbrace{\frac{\partial f}{\partial y_j}}_{G_\text{u}^j} &= \underbrace{2 \tr\, (\mathbf{E}_\text{u})^\T \frac{\partial\mathbf{E}_\text{u}}{\partial y_j}}_{G_\text{uu}^j} + \underbrace{2 \tr\, (\mathbf{E}_\mathbf{k}^\text{v})^\T \frac{\partial\mathbf{E}_\mathbf{k}^\text{v}}{\partial y_j}}_{G_\text{vu}^j} \text{,} \\
\underbrace{\frac{\partial f}{\partial Q_\mathbf{h}^{\text{v}mn}}}_{G_{mn}^{\text{v}\mathbf{h}}} &= \underbrace{2 \tr\, (\mathbf{E}_\text{u})^\T \mathbf{0}_{mn}^\mathbf{h}}_{G_{mn}^{\text{uv}\mathbf{h}}} + \underbrace{2 \tr\, (\mathbf{E}_\mathbf{k}^\text{v})^\T \frac{\partial\mathbf{E}_\mathbf{k}^\text{v}}{\partial Q_\mathbf{h}^{\text{v}mn}}}_{G_{mn}^{\text{vv}\mathbf{h}}} \text{.}
\end{align}

Applying the differentiation chain rule to obtain the gradient, $G_\text{u}$, of the objective function, $f$, with respect to the forward weights, $y$, yields two terms, $G^\text{uu}$ and $G^\text{vu}$:
\begin{align}
G_j^\text{uu} &= 4 \tr\, (\mathbf{E}^\text{u})^\T (\mathbf{Z}^{\mathbf{0}j})^\T \mathbf{Q}_\text{u}^\mathbf{0} \text{,} \\
G_j^\text{vu} &= 2 \tr\, (\mathbf{E}_\mathbf{k}^\text{v})^\T \mathbf{Q}_\mathbf{h}^\text{v} (\mathbf{Z}^{\mathbf{i}j})^\T \varsigma_\mathbf{k}^{\mathbf{h}\mathbf{i}} \text{.}
\end{align}
Applying the chain rule to obtain the gradient, $G^\text{v}$, with respect to the backward factor, $\mathbf{Q}^\text{v}$, expressed as a higher-degree scalar, $Q^\text{v}$, yields one nonzero term, $G^\text{vv}$, in this case:
\begin{align}
G_{mn}^{\text{vv}\mathbf{h}} &= 2 \tr\, (\mathbf{E}_\mathbf{k}^\text{v})^\T \mathbf{e}_m \mathbf{e}_n^\T (\mathbf{Q}_\text{u}^\mathbf{i})^\T \varsigma_\mathbf{k}^{\mathbf{h}\mathbf{i}} \text{.}
\end{align}
Whether an index, like $j$, exists in the true or false-variant (subscript or superscript) position matters. Index variants in divisors, differentials included, change to complements.

Because the \ac{RT} algebra expresses not only scalar tensors but also matrices and vectors with indices, called matrices and vectors of nonzero degree as opposed to matrix and vector tensors, the \ac{RT} algebra incorporates linear algebra identities like commutation in the trace of a matrix product:
\begin{align}
\tr\, \mathbf{A} \mathbf{B} &= \tr\, \mathbf{B} \mathbf{A} \text{.}
\end{align}
Skipping intermediate steps that involve other \ac{RT} algebra identities, further simplifications are possible before implementation in accompanying software, the RTToolbox:
\begin{align}
G_j^\text{uu} &= 4 \tr\, \mathbf{Q}_\text{u}^\mathbf{0} (\mathbf{E}^\text{u})^\T (\mathbf{Z}^{\mathbf{0}j})^\T \text{,} \\
G_j^\text{vu} &= 2 \tr\, \varsigma_\mathbf{k}^{\mathbf{h}\mathbf{i}} (\mathbf{E}_\mathbf{k}^\text{v})^\T \mathbf{Q}_\mathbf{h}^\text{v} (\mathbf{Z}^{\mathbf{i}j})^\T \text{,} \\
G_{mn}^{\text{vv}\mathbf{h}} &= \mathbf{e}_n^\T (\mathbf{G}_\mathbf{h}^\text{vv})^\T \mathbf{e}_m \text{,} \\
\mathbf{G}_\mathbf{h}^\text{vv} &= 2 \cdot \mathbf{E}_\mathbf{k}^\text{v} (\varsigma_\mathbf{k}^{\mathbf{h}\mathbf{i}})^\T \mathbf{Q}_\text{u}^\mathbf{i} \text{.}
\end{align}
These simplifications consider operand dimension sizes and left-to-right binary multiplication in MATLAB, an object-oriented programming language. The RTToolbox also replaces the trace-after-multiply with \ac{BSX}-and-sum operations.

Parameters for \texttt{fminunc} of MATLAB's Optimization Toolbox, employed in Algorithm~\ref{fig:nullspace}, include objective function and step size tolerances, along with a maximum number of iterations. An end user may accept default values or specify them as options. When \texttt{fminunc} terminates, the \ac{Qr} factorization succeeds if the objective function, $f$, not only has stopped minimizing but also equals zero to within its tolerance.

\subsection{Index Compression}

Even assuming sparse representation and the summation of equivalent monomials, the coefficient tensor, $\mathbf{a}$, that defines in canonical form a polynomial system, $\mathbf{a} \Pi$, may be inefficient in terms of the memory required per nonzero value to represent associated index values. The index range of (or the number of elements in) a degree $D$ scalar, $\mathbf{a}$, over $N$ variables equals $(N + 1)^D$, which necessitates at least $\lceil D \log_2(N + 1) \rceil$ bits to represent the index values internally as linearized index values. Due to incremental degree, the triangular system, $\mathbf{r} \Pi$, involves a coefficient tensor, $\mathbf{r}$, of even higher degree, $D + \Delta D$. If the incremental degree were to grow in proportion to $e^N$ then so would the number of bits per linearized index value.

Equations that defined polynomial systems and unitary transforms remain unchanged if coefficients get replaced with the same after summation of equivalent monomials. Therefore, scalar indices, $h$, $i$, $j$, and $k$, that each range over corresponding unique monomials may replace the index vectors, $\mathbf{h}$, $\mathbf{i}$, $\mathbf{j}$, and $\mathbf{k}$. Additional bookeeping is required to compute expressions, using such compressed indices, that also involve a variant sigma symbol. Because the number of unique monomials of at most degree $D$ over $N$ variables is $(N + D)$-choose-$D$ or $(D + N)$-choose-$N$, one can prove that the minimum number of bits per nonzero to represent a linearized index value reduces exponentially with respect to the degree:
\begin{align}
\bigg\lceil \log_2\binom{D + N}{N} \bigg\rceil &= \bigg\lceil \log_2 \frac{(D + N) \cdots (D + 1)}{N (N - 1) \cdots 1} \bigg\rceil \text{,} \\
&\le \big\lceil N \log_2(D + N) \big\rceil \text{.}
\end{align}

A tighter upper bound, $\lceil N \log_2(D + 1) \rceil$, can replace the loose one, $\lceil N \log_2(D + N) \rceil$. In a degree-$D$ monomial, setting aside without prejudice a constraint on the sum of the powers, each of $N$ variables goes to a power of $0$ to at most $D$. For the special case of a degree-two polynomial over $N$ variables, Figure~\ref{fig:indexspace} illustrates the difference in representation. Here, one can more easily count the unique monomials as $(N + 2)$-choose-$2$ rather than $(2 + N)$-choose-$N$, although equal.

\begin{figure}[t]
\centering
\begin{tabular}{c}
$\bracket{x_1 \\ x_2 \\ \vdots \\ x_N \\ 1}^\T \underbrace{\bracket{a_{11} & a_{12} & \cdots & a_{1N} & a_{10} \\ 0 & a_{22} & \cdots & a_{2N} & a_{20} \\ \vdots & \vdots & \ddots & \vdots & \vdots \\ 0 & 0 & \cdots & a_{NN} & a_{N0} \\ 0 & 0 & \cdots & 0 & a_{00}}}_{\text{(a) index range } \propto\, (N + 1)^D} \bracket{x_1 \\ x_2 \\ \vdots \\ x_N \\ 1} = 0$ \\
$\underbrace{\bracket{a_{11} & \cdots & a_{10} & a_{22} & \cdots & a_{00}}}_{\text{(b) index range } \propto\, (D + 1)^N} \bracket{x_1 x_1 \\ \vdots \\ x_1 \\ x_2 x_2 \\ \vdots \\ 1} = 0$ \\
\end{tabular}
\mycaption{Representations of a multivariate degree-two polynomial.}{(a) The outer-product model naturally represents this quadratic form. A summing of related coefficients increases the sparsity but does not reduce the index range. (b) At the cost of some bookkeeping, the index range reduces by representing nonzeros in a way proportional to the number of unique monomials.}
\label{fig:indexspace}
\end{figure}


\section{Concept Demos}
\label{sec:concept demos}

This section presents two polynomial systems, chosen for educational value, to illustrate the proposed \ac{RT} framework and \ac{Qr} factorization approach to triangularization. One, \texttt{boxdims}, solves without iteration. The other, \texttt{arm2seg}, needs iterations that an executable accelerates. While depicting key \ac{RT} algebra concepts, the section introduces codesigned \ac{RT} software.

\subsection{Box Dimensions}

After installing the RTToolbox, \ac{R2026}, an end user may compile a provided file, \texttt{spmex1.c}, to produce an optional executable, \texttt{spmex1}. Given availability of a C compiler in MATLAB, a simple \texttt{mex} command would produce the executable, possibly without a use case via a \texttt{-D}... option.

\begin{verbatim}
>> % mex -R2018a -Drrlu spmex1.c
>> % mex -R2018a -Dbsx spmex1.c
>> mex -R2018a spmex1.c
\end{verbatim}

Figure~\ref{fig:boxdims} illustrates a box dimensions problem. Three homogeneous polynomial equations depend on required numerical parameters: total length, total surface area, and total volume. One vector, $\mathbf{x}$, represents the three scalar variables of interest: length, width, and height. The RTToolbox includes a function, \texttt{boxdims}, that returns a sparse \texttt{tensor} object, \texttt{a}, which defines all ten nonzero coefficients, $\mathbf{a}$, of the system, $\mathbf{a} \Pi$.

\begin{figure}[t]
\centering
\begin{tabular}{c}
\insertpdf{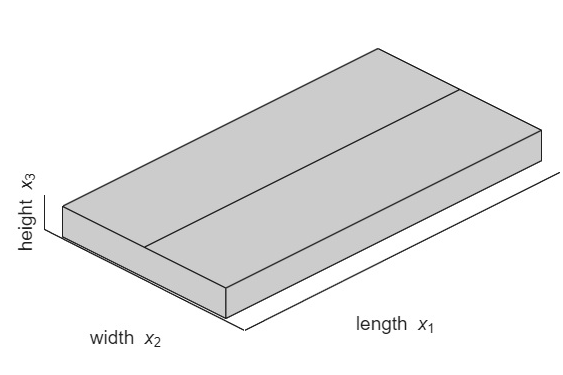} \\
$\underbrace{\bracket{x_1 + x_2 + x_3 - 1 \\ 2 x_1 x_2 + 2 x_1 x_3 + 2 x_2 x_3 - 0.5 \\ x_1 x_2 x_3 - 0.01}}_{\text{(b) original } \mathbf{a}_\mathbf{j} \Pi^\mathbf{j}(\mathbf{x}, 1)} = \underbrace{\bracket{0 \\ 0 \\ 0}}_{\mathbf{0}}$ \vspace{1ex} \\
$\underbrace{\bracket{x_1 + x_2 + x_3 - 1 \\ -2 x_2^2 - 2 x_2 x_3 + 2 x_2 - 2 x_3^2 + 2 x_3 - 0.5 \\ x_3^3 - x_3^2 + 0.25 x_3 - 0.01}}_{\text{(c) triangular } \mathbf{r}_\mathbf{k} \Pi^\mathbf{k}(\mathbf{x}, 1)} = \underbrace{\bracket{0 \\ 0 \\ 0}}_{\mathbf{0}}$ \\
\end{tabular}
\mycaption{The box dimensions problem and algebraic geometry.}{(a) Suppose we know the total length, $1\,\m$, the surface area, $0.5\,\m^2$, and the volume, $0.01\,\m^3$, of a rectangular box. (b) We can make a degree-three polynomial system over the three dimensions of interest, $x_1$, $x_2$, and $x_3$, in $\m$. (c) The proposed \acs{Qr} factorization yields a degree-three triangular system that makes it easier to solve, in sequence, for the unknown height, width, and length.}
\label{fig:boxdims}
\end{figure}

\begin{verbatim}
>> a = boxdims(1,0.5,0.01);
>> disp(a)
   ...
   (2,1,2,3,4)     2
   (2,1,4,4,4)   -0.5000
   (3,1,1,2,3)     1
   (3,1,4,4,4)   -0.0100
\end{verbatim}

With \ac{R2026}, \texttt{tensor} objects have a \texttt{qr} method for \ac{Qr} factorization. When invoked with one, two, three, or four output arguments, it returns the triangular factor, \texttt{r}, the forward unitary and triangular factors, \texttt{Qu} and \texttt{r}, the three factors, \texttt{Qv}, \texttt{Qu}, and \texttt{r}, or the three factors and a backward identity value, \texttt{Qv}, \texttt{Qu}, \texttt{r}, and \texttt{Id}, respectively. Each factor, and the identity, is a \texttt{tensor} having, as with \ac{R2024}, a \texttt{degree} method.

\begin{verbatim}
>> r = qr(a,2,'Display','off');
>> assert(degree(r) == degree(a)+2)
>> disp(r)
   ...
   (3,1,3,3,3,4,4)     1
   (3,1,3,3,4,4,4)    -1
   (3,1,3,4,4,4,4)    0.2500
   (3,1,4,4,4,4,4)   -0.0100
\end{verbatim}

The \texttt{qr} method accepts name-value argument pairs that set parameters of an internal optimization that uses the \texttt{fminunc} function of MATLAB's Optimization Toolbox. All such arguments pass unchanged to \texttt{fminunc} as an \texttt{options} object, constructed via \texttt{optimset}. Using a \texttt{'Display'} option, which defaults to \texttt{'final'} (not \texttt{'off'} or \texttt{'none'}), one can clearly demonstrate that, for the \texttt{boxdims} problem, the proposed \ac{Qr} factorization does not require iteration.

\begin{verbatim}
>> [Qu,r] = qr(a,2,'Display','iter');
...
Initial point is a local minimum.
...
\end{verbatim}

While a \texttt{simplify} method performs without multiplication the equivalent of multiplication with a variant sigma symbol, $\varsigma$, the RTToolbox also has a function, \texttt{vsigma}, to construct a variant sigma symbol as a sparse \texttt{tensor} object, \texttt{vs}. The method and function facilitate \ac{Qr} verification.

\begin{verbatim}
>> [Qv,Qu,r] = qr(a,2,'Display','off');
>> Qr = simplify(Qv*r); % 'mindeg'
>> assert(isequal(a,Qr(index(a))))
>> [vs,~,k] = vsigma(Qu',a);
>> assert(isequal(r(k),Qu'*a*vs))
\end{verbatim}

As triangular coefficients, \texttt{r}, involve monomial collection after \texttt{tensor} multiplication, the highest monomial degree associated with a nonzero coefficient may be less than the number of indices, \texttt{degree(r)}. With its default \texttt{'mindeg'} option, as opposed to \texttt{'argdeg'}, the \texttt{simplify} method returns an equivalent \texttt{tensor} of minimum \texttt{degree}.

\begin{verbatim}
>> r = simplify(r);
>> disp(r)
   ...
   (3,1,3,3,3)     1
   (3,1,3,3,4)    -1
   (3,1,3,4,4)    0.2500
   (3,1,4,4,4)   -0.0100
\end{verbatim}

The \texttt{tensor} class has a \texttt{roots} method that for a univariate polynomial invokes MATLAB's \texttt{roots} function, passing along the coefficients as a sparse row-shaped vector. It returns the result as a column-shaped vector. Because of symmetries in the box dimensions problem, the three solution values, \texttt{x3}, of the end triangular equation make a solution vector, \texttt{x}.

\begin{verbatim}
>> k = index(r);
>> x3 = roots(r(end,1,k));
>> x = sort(x3,'descend')
x =
    0.6264
    0.3244
    0.0492
\end{verbatim}

Another method, \texttt{polyval}, evaluates a polynomial system from its coefficients, specified as a \texttt{tensor} object. Values of variables must be specified as a non-\texttt{tensor} matrix or column vector. Matrix variables interpret columnwise. For each input column, the \texttt{polyval} method evaluates the system, returning an output column. Output rows correspond to system equations. Input columns should be augmented with ones or an additional argument of ones or one should be given.

\begin{verbatim}
>> polyval(a,x,1)
ans =
   1.0e-15 *
    0.2220
    0.2220
    0.0035
\end{verbatim}

Requiring MATLAB's Symbolic Math Toolbox for this use, \texttt{polyval} accepts \texttt{sym} variables, in which case it can return a polynomial system, upon transposition, in the form expected by Symbolic Math Toolbox functions like \texttt{gbasis}. For the problem at hand, the \ac{Qr} factorization computes a reduced \ac{GB} equivalent, differences amounting only to scale factors and insignificant round-off errors. Figure~\ref{fig:boxdims} presents the triangular system obtained via \ac{Qr} factorization with the RTToolbox.

\begin{verbatim}
>> symx = sym('x',[3 1]);
>> symr = polyval(r,symx,1);
>> syma = polyval(a,symx,1);
>> simplify(symr.'*diag([1 -2 100])- ...
    gbasis(syma.','MonomialOrder','lex'))
ans =
[0, 0, 0]
\end{verbatim}

Because forward and backward unitary factors, \texttt{Qu} and \texttt{Qv}, satisfy the unitary identities, their end slices, \texttt{Qu(:,:,end)} and \texttt{Qv(:,:,end)}, equate and have the form of an ordinary unitary matrix. With the \texttt{polyval} method and the Symbolic Math Toolbox, one can compute the forward and backward transforms, \texttt{U} and \texttt{V}, implied by these factors. They incorporate variables. Due to the backward unitary identity, the transform product, \texttt{V*U}, equals an ordinary identity matrix, \texttt{I}.

\begin{verbatim}
>> U = polyval(Qu',symx,1); % symU
>> V = polyval(Qv,symx,1); % symV
>> I = eye(3,3,'sym'); % symI
>> assert(isequal(simplify(V*U),I))
\end{verbatim}

\subsection{Two-Segment Arm}

Figure~\ref{fig:arm2seg} illustrates a two-segment robotic arm problem. Instead of formulating it in terms of the actuator angles of interest, $\phi_1$ and $\phi_2$, formulating it in terms of displacements, $x_1$ to $y_2$, produces a polynomial system, $\mathbf{a} \Pi$. From a solution vector, $[y_2, x_2, y_1, x_1]^\T$, in this case giving primaacy to the first actuator, four-quadrant inverse tangents yield angles.

\begin{figure}[t]
\centering
\begin{tabular}{c}
\insertpdf{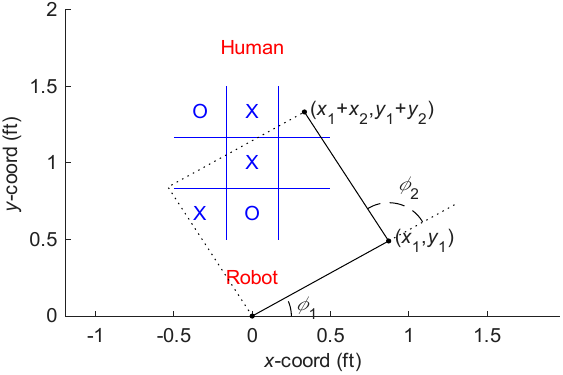} \vspace{1ex} \\
$\underbrace{\bracket{x_1 + x_2 - 1 / 3 \\ y_1 + y_2 - 4 / 3 \\ x_1^2 + y_1^2 - 1 \\ x_2^2 + y_2^2 - 1}}_{\text{(b) original } \mathbf{a}_\mathbf{j} \Pi^\mathbf{j}(\mathbf{w}, 1)} = \underbrace{\bracket{0 \\ 0 \\ 0 \\ 0}}_{\mathbf{0}}$ \vspace{1ex} \\
$\underbrace{\mathbf{R} \bracket{8 / 3 \,y_2 - 2 / 3 \,x_1 - 5 / 3 \\ x_2 + x_1 - 1 / 3 \\ 8 / 3 \,y_1 + 2 / 3 \,x_1 - 17 / 9 \\ 68 / 9 \,x_1^2 - 68 / 27 x_1 - 287 / 81}}_{\text{(c) triangular } \mathbf{r}_\mathbf{k} \Pi^\mathbf{k}(\mathbf{w}, 1)} = \underbrace{\bracket{0 \\ 0 \\ 0 \\ 0}}_{\mathbf{0}}$ \\
\end{tabular}
\mycaption{The two-segment arm problem and algebraic geometry.}{(a) A robot playing a game has an arm with two $1\,\ft$ segments and one end at the origin. (b) To direct the other end to a point in a plane, $(1 / 3, 4 / 3)\,\ft$, yields a degree-two polynomial system over four variables, $x_1$, $y_1$, $x_2$, and $y_2$, in $\ft$. (c) A \acs{Qr} factorization exists to produce a degree-two triangular system for calculating, in sequence, the primary and secondary actuator angles, $\phi_1$ and $\phi_2$.}
\label{fig:arm2seg}
\end{figure}

Here, the \ac{Qr} factorization does not fully converge within the default maximum number of iterations, 400, of the \texttt{fminunc} function. The \texttt{arm2seg} function of the RTToolbox creates a required sparse \texttt{tensor} object. To assess convergence, one can examine the residual errors of the unitary identities, something the \texttt{qr} method with four output arguments facilitates. As before, this work does not address how to predict the minimum incremental degree parameter, $\Delta D$, two in both demos.

\begin{verbatim}
>> a = arm2seg(1/3,4/3);
>> [Qv,Qu,r,Id] = qr(a,2);
Solver stopped prematurely.
...
>> iseq = @(a,b,n) isequal( ...
    round(a,n),round(b,n));
>> I = speye(size(Qu,2));
>> Qu0 = Qu(:,:,end);
>> assert(iseq(Qu0'*Qu0,I,3))
>> [vs,~,k] = vsigma(Qv,Qu');
>> assert(iseq(Qv*Qu'*vs,Id(k),2))
\end{verbatim}

To increase the accuracy of the unitary factor identities from three and two digits after the decimal point to six and four digits, one can increase the \texttt{'MaxIterations'} option, e.g., by setting it to \texttt{Inf}. After 1226 iterations, the \texttt{fminunc} function terminates on account of a gradient size tolerance, which can also be changed. Unless otherwise stated, additional results in this section refer to default \texttt{fminunc} options.

With this release, the \texttt{qr} method does not automatically calculate a tolerance for significant nonzeros nor zero insignificant ones. As most coefficients are on the \texttt{1e0} order, one can eliminate those below the \texttt{1e-15} order. After doing so, the number of nonzeros in the triangular system reduces from 174 or more (exact in MATLAB Online) to 118. The roots of the end univariate equation prove accurate, given a known solution via \iac{GB} method, to 15 digits after the decimal point.

\begin{verbatim}
>> assert(nnz(r) >= 174)
>> r(abs(r) < 1e-15) = 0;
>> assert(nnz(r) == 118)
>> x1Qr = roots(r(end,1,index(r)));
>> x1GB = roots([68/9 -68/27 -287/81]);
>> assert(iseq(x1Qr,x1GB,15))
\end{verbatim}

Because there are more significant issues to discuss at this time regarding \ac{Qr} factorization, this work does not investigate back substitution into the triangular system to solve for other variables. Admittedly, the weakest part of Algorithm~\ref{fig:nullspace} is the second part involving the \texttt{fminunc} optimization. The motivation for that choice is that it was a simple way to complete the algorithm that would not take a lot of space to explain. One can easily verify, with the included software, that the end univariate equation due to the first and third parts of the algorithm alone, i.e., with zero iterations, is correct to 15 digits after the decimal point. Figure~\ref{fig:arm2seg} illustrates (solid line) one solution, 0.8715, to $x_1$ (and also to $y_1$, $x_2$, and $y_2$).

\begin{verbatim}
>> r = qr(a,2,'MaxIterations',0);
>> assert(nnz(r) >= 48)
>> r(abs(r) < 1e-15) = 0;
>> assert(nnz(r) == 19)
>> x1Qr = roots(r(end,1,index(r)));
>> assert(iseq(x1Qr,x1GB,15))
\end{verbatim}

With zero iterations, the \texttt{qr} method for the \texttt{arm2seg} system yields unitary factors that satisfy the forward identity to 15 digits after the decimal point but do not satisfy the backward one. The \texttt{arm2seg} function can provide handcrafted unitary factors that satisfy the backward identity to 15 digits. A \texttt{qq} method of the \texttt{tensor} class can transform unitary factors that accurately satisfy the backward identity into ones that accurately satisfy both identities. This non-iterative method uses two 180\textdegree{} matrix rotations (similar to but not transpositions), Cholesky factorization of a guaranteed positive-definite matrix, matrix multiplication, and matrix division. In this manner, one can constructively prove that a sparser \ac{Qr} triangularization, having 18 nonzeros, exists here that equates to a reduced \ac{GB} triangularization via an upper-triangular matrix, \texttt{R} (denoted $\mathbf{R}$ in Figure~\ref{fig:arm2seg}), that the \texttt{qq} method can provide. At present, the released \ac{Qr} factorization algorithm does not take advantage of these noteworthy \ac{Qr} properties identified by this work.

\begin{verbatim}
>> [a,Qu,Qv,Id] = arm2seg(1/3,4/3);
>> [Qu,Qv,R] = qq(Qu,Qv);
>> r = Qu'*a*vsigma(Qu',a);
>> assert(nnz(r) >= 31)
>> r(abs(r) < 1e-16) = 0;
>> assert(nnz(r) == 18)
...
\end{verbatim}

Table~\ref{tab:tictoctime} summarizes the time required by the \texttt{qr} method to compute a factorization and triangularization for the \texttt{boxdims} and \texttt{arm2seg} systems. Averaged over ten runs, times were measured with \texttt{tic} and \texttt{toc} in MATLAB Online. Connected to the \texttt{fminunc} optimization, times primarily depend on the actual number of iterations, which for \texttt{boxdims} is zero regardless of \texttt{options.MaxIterations}. Times secondarily depend on unavailability, \texttt{\% mex -R2018a spmex1}, of the \texttt{spmex1} executable or either of its two main use cases.

\begin{table}[t]
\centering
\mycaption{Elapsed time vs.\ number of iterations.}{Unlike the box dimensions problem, the two-segment arm problem requires multiple iterations to compute a \acs{Qr} factorization according to the proposed algorithm. Time required for initial null-space calculation and subsequent optimization was lowered by a developed \acs{MEX} function, \texttt{spmex1}.}
\label{tab:tictoctime}
\vspace{1ex}
\begin{tabular}{lcccc}
\hline
\acs{Qr} concept demo: & \multicolumn{2}{c}{\texttt{boxdims}} & \multicolumn{2}{c}{\texttt{arm2seg}} \\
\acs{GB} method (\texttt{gbasis}): & \multicolumn{2}{c}{0.01 (s)} & \multicolumn{2}{c}{0.01 (s)} \\
\hline
\texttt{\% mex -R2018a spmex1} & 3.93 & 4.01 & 9.14 & 29.7 \\
\texttt{mex -R2018a -Drrlu spmex1} & 3.85 & 3.92 & 9.13 & 21.6 \\
\texttt{mex -R2018a -Dbsx spmex1} & 0.03 & 0.05 & 0.03 & 19.7 \\
\texttt{mex -R2018a spmex1} & 0.03 & 0.04 & 0.03 & 15.1 \\
\hline
\texttt{options.MaxIterations} & 0 & 400 & 0 & 400 \\
\hline
\end{tabular}
\end{table}

Without \texttt{spmex1} or its \texttt{'rrlu'} use case, the \texttt{qr} method uses MATLAB's \texttt{rref} funtion to compute a precursor to a null space, from which it computes the null space. The sparse use case, \texttt{'rational'}, of MATLAB's \texttt{null} function works the same way. The \texttt{qr} method allows \texttt{null} function use. With it, times (not shown) are essentially identical to the worst two lines of times in Table~\ref{tab:tictoctime}, \texttt{\% mex} ... \texttt{spmex1} and \texttt{mex} ... \texttt{-Drrlu spmex1}. The second-worst line benefits from the \texttt{'bsx'} use case of \texttt{spmex1}, which accelerates expressions in gradient calculations of the \texttt{fminunc} optimization.

\begin{verbatim}
>> r = qr(a,2,@(A) null(A,'rational'));
\end{verbatim}

Table~\ref{tab:tictoctime} shows that, for the two concept demos, the \texttt{qr} method takes slightly and much longer than the \texttt{gbasis} function. In one demo, \texttt{boxdims}, the triangularizations are equivalent. In the other case, the initial guess is close enough to \iac{GB} solution in terms of the end univariate equation but relatively far, to \iac{GB} equivalent solution that exists, for a quasi-Newton optimization. The \texttt{gbasis} function also requires no incremental degree parameter. By examining the \texttt{gbasis} code to the extent possible, most of its operation happens in a \texttt{mupad} executable. In contrast, a significant part of the \texttt{qr} method uses interpreted MATLAB statements, with acclerations to null-space and gradient calculations via the developed \texttt{spmex1} executable. The sparse null-space calculation, a linear algebra operation, outperforms MATLAB's own functions, \texttt{null} or rather \texttt{rref}, for the same purpose.


\section{Tensor Software}
\label{sec:tensor software}

After contrasting the RTToolbox, \ac{R2026}, to a prior release, \ac{R2024}, this section elaborates on how the toolbox facilitates investigation, in MATLAB, of the proposed \ac{Qr} factorization approach to numerical algebraic geometry. A \texttt{sparse1} class and an \texttt{spmex1} executable, two of multiple software developments, help to express and accelerate \emph{sparse} tensor algorithms that leverage a dual-variant index notation, the \ac{RT} algebra.

\subsection{Toolbox Release}

Table~\ref{tab:rttoolbox} presents the initial release, \ac{R2024}, and the current release, \ac{R2026}, of the RTToolbox. With MATLAB and the RTToolbox, \texttt{tensor} and \texttt{index} objects may be constructed and used in innumerable ways consistent with the \ac{RT} algebra, now extended to algebraic geometry via \ac{Qr} factorization.

\begin{table}[t]
\centering
\mycaption{Composition of the open-source RTToolbox, \acs{R2026}.}{Compared to the prior release, \acs{R2024} \cite{Joseph2024a}, this release updates the \texttt{tensor} class and introduces a \texttt{sparse1} class and an \texttt{spmex1} executable. With an \texttt{index} class and some functions, one can investigate tensor algorithms in MATLAB, like the proposed \acs{Qr} factorization, using a dual-variant index notation.}
\label{tab:rttoolbox}
\vspace{1ex}
\begin{tabular}{lll}
\hline
M- or C-file & Summary & Note \\
\hline
\texttt{tensor.m} & Definition of \texttt{tensor} objects & Revised \\
\texttt{tensorTest.m} & Tests of \texttt{tensor} objects & (\acs{R2024}) \\
\texttt{index.m} & Definition of \texttt{index} objects & (\acs{R2024}) \\
\texttt{indexTest.m} & Tests of \texttt{index} objects & (\acs{R2024}) \\
\texttt{page*.m} & Pagewise helper functions & (\acs{R2024}) \\
\hline
\texttt{sparse1.m} & Definition of \texttt{sparse1} objects & Added \\
\texttt{sparse1Test.m} & Tests of \texttt{sparse1} objects & Added \\
\texttt{spmex1.c} & Accelerate \texttt{sparse1} methods & Added \\
\texttt{spmex1Test.m} & Tests of \texttt{spmex1} executable & Added \\
\hline
\texttt{kdelta.m} & Kronecker delta ($\delta$) sparse tensor & Added \\
\texttt{vsigma.m} & Variant sigma ($\varsigma$) sparse tensor & Added \\
\hline
\end{tabular}
\end{table}

Each \texttt{tensor} object has two properties, \texttt{indices} and \texttt{entries}. The latter, \texttt{entries}, for all \ac{R2024} intents and purposes had to be a dense array of class \texttt{double}, MATLAB's primary data type or built-in class. Dimension sizes of this array represented, in sequence, row-and-column dimension sizes followed by additional dimension sizes associated with an \texttt{index} vector, the protected \texttt{indices} property.

This work applies and extends the \ac{RT} algebra to model canonical and triangular polynomial systems, as well as relationships between the two via generalized unitary transforms. It also leverages the \ac{RT} algebra to develop and explain an initial \ac{Qr} factorization algorithm to triangularize a canonical polynomial system. With the RTToolbox, \ac{R2026}, one can investigate these models and the expressions and statements that make up the algorithm with programmatic and computational efficiency in MATLAB, facilitating further development.

The requirement for programmatic efficiency was already met by the RTToolbox, \ac{R2024}. That release's \texttt{tensor} and \texttt{index} classes beautifully expressed the dual-variant index notation in the thereby extended MATLAB programming language. Expressions computed with relatively small abstraction penalties, identified primarily with M-file or interpreted set-theoretic functions of MATLAB, used to parse \texttt{index} scalars and vectors, because numerical computations were performed using mainly built-in or compiled functions of MATLAB.

Although MATLAB's primary data type, class \texttt{double}, supports dense and sparse arrays of dimension sizes $M \times N$, there is no MATLAB sparse \ac{MDA} type. The prior RTToolbox release could only construct degree-zero sparse \texttt{tensor} objects, i.e., having \texttt{entries} of class \texttt{double} with dimension sizes $M \times N$. Operations that required intermediate or final nonzero-degree sparse tensors would fail to compute.

For computational efficiency in sparse tensor representation and manipulation, consistent with the \ac{RT} algebra, this work developed a sparse \ac{MDA} class, \texttt{sparse1}, and an executable function, \texttt{spmex1}. They accompany revisions to the \texttt{tensor} class not specific to polynomial systems or \ac{Qr} factorization. The new class comprises one file of interpreted MATLAB code. The new function comprises one file of compiled C code. To compile the \ac{MEX} function, easily done by installing the RTToolbox from MATLAB Central File Exchange using the MATLAB \ac{IDE}'s Add Package button, the user must also have configured a C/C++ compiler when MATLAB was installed or must follow prompts to do so by the \ac{IDE}'s Add Package task. The toolbox can operate without the \ac{MEX} function.

The \texttt{sparse1} class provides what looks to other MATLAB code, like the prior class \texttt{tensor}, as a class \texttt{double} object having the \texttt{sparse} attribute, i.e., \texttt{issparse(obj)} equals \texttt{true}, but that allows construction and operation consistent with \acp{MDA}. Polynomial systems and \ac{Qr} factorization aside, almost no changes are required, therefore, to class \texttt{tensor} and no changes are required, whatsoever, to class \texttt{index}.

The changes to class \texttt{tensor} are of three kinds. One is the introduction of familiar sparse \texttt{double} methods, like \texttt{issparse}, that simply apply to the \texttt{entries} property. There was no reason to include such methods in the initial release, \ac{R2024}, of the RTToolbox as it was only introduced with dense numerical examples. The second kind of change involved editing and adding protected \texttt{tensor} methods to perform exactly the same functionality as before but, for the case of sparse \texttt{entries}, in a manner proportional to the number of nonzeros and not the number of elements.

The third kind of changes concern algebraic geometry methods, like \texttt{qr} and \texttt{polyval}, that overload the names of numerical methods in MATLAB for \texttt{double} arrays. When invoked on what are actually \texttt{tensor} objects, these methods implement the \ac{Qr} factorization and compute a polynomial function evaluation using polynomial system coefficients represented according to the proposed tensor algebra.

The \texttt{qr} method always returns sparse \texttt{tensor} objects. Its internal computations always focus on nonzeros regardless of their proportion. The \texttt{polyval} method always returns a degree-zero vector or matrix, meaning it could be represented by class \texttt{double}, dense or sparse. Favouring \ac{RT} algebra, it always returns a \texttt{tensor} object. Strictly speaking, the new \texttt{tensor} methods accept dense \texttt{tensor} arguments and even a mixture of class \texttt{tensor} and \texttt{builtin} class \texttt{double} arguments, dense or sparse. However, for correct operation \ac{MDA} arguments must be supplied as \texttt{tensor} objects, which have \texttt{indices} as a property beyond \texttt{entries}.

Additional toolbox functions, in Table~\ref{tab:rttoolbox}, produce Kronecker delta and varsigma symbols, $\delta$ and $\varsigma$, of arbitrary degree and dimension sizes. They return sparse \texttt{tensor} objects.

\subsection{Sparse Tensors}

Table~\ref{tab:sparse1} lists the public methods, operators included, of the \texttt{sparse1} class, \ac{R2026}. The table groups methods to organize explanation of technical challenges solved by the design.

\begin{table}[t]
\centering
\mycaption{Public methods of an RTToolbox sparse \acs{MDA} class.}{A sparse \texttt{tensor} object of nonzero degree requires a sparse \acs{MDA} class to represent and operate on its \texttt{entries}, alongside its \texttt{indices}. The \texttt{sparse1} class leverages \acs{1D}, and sometimes \acs{2D}, sparse arrays of MATLAB to define sparse \acs{MDA} objects.}
\label{tab:sparse1}
\vspace{1ex}
\begin{tabular}{l}
\hline
Public methods (\texttt{sparse1} class, *\texttt{spmex1} acceleration) \\
\hline
Constructor, \emph{etc.} \\
\hspace{1em} \verb|sparse1| (1--6 args in), \verb|find|, \verb|disp|, \verb|full|, \verb|sparse|, \verb|logical|, \\
\hspace{1em} \verb|double|, \verb|ndims|, \verb|size|, \verb|end|, \verb|numel|, \verb|isempty|, \verb|length|, \verb|nnz|, \\
\hspace{1em} \verb|nzmax|, \verb|issparse|, \verb|isreal|, \verb|class|, \verb|isa|, \verb|subsindex| \\
Unary operations \\
\hspace{1em} \verb|subsref|, \verb|permute|, \verb|reshape|, \verb|sum|, \verb|max|, \verb|min|, \verb|all|, \verb|any|, \verb|abs|, \\
\hspace{1em} \verb|uminus|, \verb|uplus|, \verb|conj|, \verb|sqrt|, \verb|log|, \verb|not|, \verb|real|, \verb|imag|, \verb|round|, \\
\hspace{1em} \verb|pagetranspose|, \verb|pagectranspose|, \verb|pagetrace|, \verb|pagediag|, \\
\hspace{1em} \verb|transpose|, \verb|ctranspose|, \verb|trace|, \verb|diag|, \verb|rref|*, \verb|null|* \\
Binary operations \\
\hspace{1em} \verb|subsasgn|, \verb|plus|*, \verb|minus|*, \verb|eq|*, \verb|ne|*, \verb|lt|*, \verb|gt|*, \verb|le|*, \verb|ge|*, \\
\hspace{1em} \verb|and|, \verb|or|, \verb|times|*, \verb|ldivide|*, \verb|rdivide|*, \verb|power|, \verb|complex|, \\
\hspace{1em} \verb|pagemtimes|, \verb|pagemldivide|, \verb|pagemrdivide|, \\
\hspace{1em} \verb|mtimes|, \verb|mldivide|, \verb|mrdivide| \\
$N$-ary operations \\
\hspace{1em} \verb|isequal|, \verb|cat|, \verb|horzcat|, \verb|vertcat| \\
\hline
\end{tabular}
\end{table}

To construct \ac{1D} or \ac{2D} arrays having \texttt{double} or \texttt{logical} entries, the \texttt{sparse1} class follows the behaviour of MATLAB's \texttt{sparse} function. Because the latter accepts one, two, three, five, and six arguments,  the \texttt{sparse1} constructor accepts one to six arguments. A new four-argument form, \texttt{sparse1(lk,nz,dims,nzm)}, constructs an object of dimension sizes \texttt{dims}, possibly for a sparse \ac{MDA}, with storage for at least \texttt{nzm} nonzero values \texttt{nz}, at linear indices \texttt{lk}. Internally, a \texttt{sparse1} object uses \iac{1D} sparse MATLAB array associated with however many dimension sizes.

Deconstruction and display methods, \texttt{find} and \texttt{disp}, use a MATLAB function, \texttt{ind2sub}, to compute index values, \texttt{i}, \texttt{j}, etc., from linear index values, \texttt{lk}. Methods \texttt{full} and \texttt{sparse} cast the \texttt{sparse1} object to a \texttt{full} and \texttt{sparse} array, where possible, of type \texttt{double} or \texttt{logical}. Methods \texttt{logical} and \texttt{double} keep the \texttt{sparse1} object but cast its nonzero values to type \texttt{logical} and \texttt{double}, without altering the number of nonzeros or their linear index values. Methods \texttt{ndims} to \texttt{isa} return details like the number of dimensions or if nonzero values are of a given class.

The \texttt{subsindex} method casts a \texttt{sparse1} object, \texttt{obj}, to a MATLAB \texttt{double} or \texttt{logical} array, preserving sparsity where possible, so that it may be used as a subscript in either a subscripted expression, like \texttt{arg(:,obj)}, or a subscripted assignment, like \texttt{lhs(:,obj) = rhs} where neither the \texttt{lhs} nor \texttt{rhs} argument need be a \texttt{sparse1} object.

Technical challenges of the \texttt{subsref} and \texttt{subsasgn} methods led to the \texttt{sparse1} class sometimes creating a temporary sparse \ac{2D} array. Unary method \texttt{subsref} implements a subscripted object reference, like \texttt{obj(:)} or \texttt{obj(:,j)} where \texttt{j} is an array of index values or a \texttt{logical} array that implies index values. Binary method \texttt{subsasgn} implements a subscripted assignment, like \texttt{lhs(:) = rhs} or \texttt{lhs(:,j) = rhs} where the \texttt{lhs} and/or \texttt{rhs} operands are \texttt{sparse1} objects. For efficiency, the design ensures the column dimension size of each such \ac{2D} array is equal to or linearly proportional to the number of nonzero columns.

Though one can convert a vector of linear index values, \texttt{lk}, into equally sized vectors, \texttt{i}, \texttt{j}, \texttt{k}, etc., of subscript index values, reorder them, and convert back to linear index values, the \texttt{permute} method favours compute-bound instead of such memory-bound steps to permute a sparse \ac{MDA}, which could reorder the dimension sizes. Meanwhile, keeping the linear index and nonzero values unchanged, along with the number of elements, the \texttt{reshape} method can, with almost zero cost, change the dimension sizes and number of dimensions.

The unary \texttt{sum} to \texttt{any} methods permute, reshape, and cast a \texttt{sparse1} object into \iac{2D} array, excising all-zero columns, and perform the required operation columnwise via the \texttt{sum} to \texttt{any} function. Such columnwise operation is most efficient considering MATLAB's internal sparse representation. Reversing the initial mapping from object to array yields, via a cast, permute, and reshape, a final \texttt{sparse1} object.

Methods \texttt{abs} to \texttt{round} apply functions \texttt{abs} to \texttt{round} to the internal sparse \ac{1D} array. Thus, MATLAB handles conversion from nonzeros to zeros, possible with \texttt{real}, \texttt{imag}, and \texttt{round}, and fill-in or conversion from zeros to nonzeros, possible with \texttt{log} and \texttt{not}. Fill-in aside, compute space and time are proportional to the initial number of nonzeros.

The \texttt{pagetranspose} and \texttt{pagectranspose} methods use \texttt{permute} to swap row and column subscripts, leaving all others unchanged. The latter conjugates nonzeros. For pagewise extraction of the main diagonal, \texttt{pagediag} maps the operation to and from \iac{3D} \texttt{sparse1} object and finds equal row and column subscripts. The \texttt{pagetrace} method operates similarly except that it performs a pagewise sum after the \texttt{pagediag} operation. Related \texttt{transpose}, \texttt{ctranspose}, \texttt{diag}, and \texttt{trace} methods invoke pagewise counterparts, for \ac{1D} and \ac{2D} objects, or throw an error, for \ac{MDA} objects.

In Table~\ref{tab:sparse1}, asterisked methods may accelerate, directly or indirectly, via the \texttt{spmex1} executable. Those accelerated directly, like \texttt{rref}, have \texttt{try}-\texttt{catch} blocks. The \texttt{try} block invokes \texttt{spmex1}. If the executable or required use case is missing, the \texttt{catch} block performs the operation via MATLAB functions. Methods that accelerate indirectly, like \texttt{null}, employ methods that accelerate directly, like \texttt{rref}.

Turning to binary operations after \texttt{subsasgn}, \texttt{sparse1} public methods fall into three patterns that Figure~\ref{fig:products} summarizes. Although it illustrates the \ac{BSX}, pagewise, and linear algebra designs for multiplication, the patterns apply to the \texttt{plus} to \texttt{complex}, the \texttt{pagemtimes} to \texttt{pagemrdivide}, and the \texttt{mtimes} to \texttt{mrdivide} methods, respectively.

\begin{figure}[t]
\mycaption{Patterns of key sparse \acs{2D} binary operations.}{(a) This Khatri-Rao or columnwise Kronecker product of sparse \acs{2D} arrays can compute \acs{BSX} products of sparse \acsp{MDA}. (b) Block-diagonal multiplication of sparse \acs{2D} arrays can compute pagewise multiplication of sparse \acsp{MDA}. (c) When matrix multiplication of sparse \acs{2D} arrays can represent a sparse \acs{MDA} operation, excising all-zero rows and columns in coordination improves efficiency.}
\label{fig:products}
\begin{align*}
\underbrace{\bracket{\mathbf{c}_1 & \cdots & \mathbf{c}_P}}_{\text{(a) } \mathbf{C}^{M N \times P}} &= \underbrace{\bracket{\mathbf{a}_1 & \cdots & \mathbf{a}_P}}_{\mathbf{A}^{M \times P}} \ast \underbrace{\bracket{\mathbf{b}_1 & \cdots & \mathbf{b}_P}}_{\mathbf{B}^{N \times P}} \\
&= \bracket{\mathbf{a}_1 \otimes \mathbf{b}_1 & \cdots & \mathbf{a}_P \otimes \mathbf{b}_P} \\[2ex]
\underbrace{\bracket{\mathbf{C}_1 & \cdots & \mathbf{0} \\ \vdots & \ddots & \vdots \\ \mathbf{0} & \cdots & \mathbf{C}_P}}_{\text{(b) } \mathbf{C}^{M P \times N P}} &= \underbrace{\bracket{\mathbf{A}_1 & \cdots & \mathbf{0} \\ \vdots & \ddots & \vdots \\ \mathbf{0} & \cdots & \mathbf{A}_P}}_{\mathbf{A}^{M P \times R P}} \underbrace{\bracket{\mathbf{B}_1 & \cdots & \mathbf{0} \\ \vdots & \ddots & \vdots \\ \mathbf{0} & \cdots & \mathbf{B}_P}}_{\mathbf{B}^{R P \times N P}} \\[2ex]
\underbrace{\bracket{\mathbf{C}_{11}^{\tilde{M} \times \tilde{N}} & \mathbf{0} \\ \mathbf{0} & \mathbf{0}}}_{\text{(c) } \mathbf{C}^{M \times N}} &= \underbrace{\bracket{\mathbf{A}_{11}^{\tilde{M} \times \tilde{P}} & \mathbf{0} \\ \mathbf{0} & \mathbf{0}}}_{\mathbf{A}^{M \times P}} \underbrace{\bracket{\mathbf{B}_{11}^{\tilde{P} \times \tilde{N}} & \mathbf{0} \\ \mathbf{0} & \mathbf{0}}}_{\mathbf{B}^{P \times N}}
\end{align*}
\vspace{-1ex}
\end{figure}

Like unary \texttt{rref} and \texttt{null} methods, binary \texttt{plus} (\verb|+|) to \texttt{complex} methods have \texttt{try}-\texttt{catch} paths. The \texttt{spmex1} executable, \ac{R2026}, accelerates operations only where operands and the result are real and sparse \texttt{double} arrays, meaning the \texttt{and} (\verb|&|) and \texttt{or} (\verb&|&) methods always go to the \texttt{catch} part. In the \texttt{catch} part, a \texttt{repmat} or \texttt{repelem} expression performs \iac{USX} of each operand. The \texttt{catch} part completes the \ac{BSX} operation by applying a MATLAB function, like \texttt{and} or \texttt{or}, to the expanded operands, both still sparse \ac{2D} arrays.

As relational operator methods \texttt{eq} (\texttt{==}) to \texttt{ge} (\texttt{>=}) require \texttt{logical} results, they first create a \texttt{sparse1} object, \texttt{obj}, equal to \texttt{arg1-arg2}. The subtraction invokes the \texttt{minus} method of \texttt{sparse1}, which \texttt{spmex1} would accelerate, because at least one operand, \texttt{arg1} or \texttt{arg2}, is a \texttt{sparse1} object here. The created object, \texttt{obj}, has an internal sparse \ac{1D} array of class \texttt{double}. Methods \texttt{eq} to \texttt{ge} replace it with the \texttt{logical} result of applying a MATLAB function, \texttt{eq} to \texttt{ge}, where the second argument is \texttt{0}, a \texttt{double} scalar. Unlike the initial numerical operation, the final relational operation may incur fill-in. When it happens, the fill-in is correct.

The \texttt{sparse1} class defines a \texttt{pagemtimes} method to support pagewise multiplication of two \acp{MDA}, one of which must be a \texttt{sparse1} object. Third and higher dimension sizes must equate or one operand must have only one page, in which case the operation proceeds with pagewise broadcasting of the \ac{2D} operand. For pagewise divisions, the class similarly defines \texttt{pagemldivide} and \texttt{pagemrdivide} methods.

In the main use case, \texttt{pagemtimes} operands have dimension sizes for matrix multiplication over rows and columns with a pagewise loop. Casting operands and result to block-diagonal forms, the \texttt{pagemtimes} method avoids loops. The \texttt{pagemldivide} and \texttt{pagemrdivide} methods operate similarly. Block-diagonal forms are \texttt{sparse1} \ac{2D} objects, not sparse \ac{2D} arrays. They may have much fewer nonzero rows and columns than row and column dimension sizes.

To perform the required operation on the \texttt{sparse1} \ac{2D} objects, the pagewise method invokes the \texttt{mtimes}, \texttt{mldivide}, or \texttt{mrdivide} method, all of which excise all-zero rows and all-zero columns consistently and outsource actual computation to MATLAB itself. The pagewise method maps the result, a \texttt{sparse1} \ac{2D} object, from its block-diagonal form to a \texttt{sparse1} \ac{MDA} object of appropriate dimension sizes.

Binary methods \texttt{mtimes} to \texttt{mrdivide} throw an error if either operand is not \ac{2D} or smaller. If either operand is scalar, \texttt{mtimes} returns a simpler \texttt{times} operation; likewise with scalar divisors for \texttt{mldivide} and \texttt{mrdivide}. Otherwise, dimension sizes must be compatible for the operation ($M \times P$ and $P \times N$ for \texttt{mtimes}). The methods disassemble each operand into row and column subscripts with nonzero values. After compressing row and column subscripts in coordination, the methods produce sparse \ac{2D} array operands. Applying a MATLAB \texttt{mtimes}, \texttt{mldivide}, or \texttt{mrdivide} yields a sparse \ac{2D} array (size $\tilde{M} \times \tilde{N}$ for \texttt{mtimes}) that disassembles into row and column subscripts, still compressed, with nonzero values. Decompressed row and column subscripts join nonzero values to construct a returned \texttt{sparse1} object having the correct dimension sizes (size $M \times N$ for \texttt{mtimes}).

To address how the \texttt{sparse1} class handles rank-deficient divisions, consider the \texttt{mtimes} operation shown in Figure~\ref{fig:products}. Assume, for simplicity, that all-zero rows of $\mathbf{A}$ and all-zero columns of $\mathbf{B}$ are at the bottom and right, respectively, and that all-zero columns of $\mathbf{A}$ that correspond to all-zero rows of $\mathbf{B}$ are at the right and bottom, respectively. The result, $\mathbf{C}$, always has all-zero rows and columns at the bottom and right. While dimension sizes, $\tilde{M}$, $\tilde{N}$, or $\tilde{P}$, of the ``top-left'' submatrices may not specify ranks, they predict a sparsity pattern in the result associated with sparsity patterns of operands.

The \texttt{sparse1} approach to rank-deficient divisions favours sparsity over unnecessary \texttt{NaN} fill-in, transferring only ``top-left'' submatrix divisions to MATLAB itself. Where \texttt{arg1} and \texttt{arg2} are sparse \ac{2D} arrays that represent ``top-left'' submatrices, the \texttt{mtimes} method wraps a MATLAB computation, \texttt{mtimes(arg1,arg2)}, with other MATLAB expressions to transform subscripts and nonzero values from one representation to another. Similarly, \texttt{mrdivide} and \texttt{mldivide} methods wrap MATLAB computations on ``top-left'' submatrices with pre- and post-processing of representations alone.

MATLAB supports methods, like \texttt{isequal}, and operator methods, like \texttt{vertcat} (\verb|[;]|), to specify true $N$-ary operations over multiple operands. When they come from multiple classes, MATLAB invokes the dominant class' method. A class may define inferior relationships to other classes, with built-in \texttt{double} and \texttt{logical} classes always inferior.

The \texttt{isequal} design models an approach to key $N$-ary challenges. The method must return \texttt{true} if all operands, \texttt{varargin\{:\}}, are equal and \texttt{false} otherwise. First, it uses a MATLAB function, \texttt{cellfun}, to compute a cell array, \texttt{dimsin}, having dimension sizes of each argument. Each must have a \texttt{size} method, perhaps built-in. The \texttt{isequal} method returns \texttt{false} if an $N$-ary MATLAB expression, \texttt{isequal(dimsin\{:\})}, is \texttt{false}. Otherwise, the method reshapes and casts each argument to a sparse column vector of built-in class, \texttt{double} or \texttt{logical} here, and returns an $N$-ary MATLAB expression, \texttt{isequal(varargin\{:\})}.

The \texttt{cat} design likewise exploits \texttt{cellfun}, $N$-ary MATLAB expressions, and sparse column vectors of built-in class. As a \texttt{horzcat} or \texttt{vertcat} invocation must match a \texttt{cat} invocation where the dimension of concatenation equals two or one, the \texttt{sparse1} class implements these operator methods exactly that way. In part because sparse column vectors of built-in class enable compute-bound operations, complexity of all \texttt{sparse1} $N$-ary methods depends linearly on the sum, given \texttt{sparse1} operands, of the numbers of nonzeros.

\subsection{Acceleration}

Table~\ref{tab:spmex1} summarizes use cases of the \texttt{spmex1} executable. Its source code comprises one C-file. This release involved a design comprising three subroutine groups: main, rearrange; and compute. There are no dependencies outside of the \ac{MEX} \ac{API}, included with the kernel, and the standard C library. Currently, the design supports only real \texttt{double} arrays.

\begin{table}[t]
\centering
\mycaption{Executable to accelerate key sparse \acs{MDA} operations.}{Coded in C, \texttt{spmex1} is a \acs{MEX} function designed mainly to accelerate sparse null-space calculation, via \acs{RRLU} factorization of one sparse \acs{2D} array, and to accelerate \acs{BSX} operations, via generalized Khatri-Rao products of two sparse \acs{2D} arrays.}
\label{tab:spmex1}
\vspace{1ex}
\begin{tabular}{p{3.3in}}
\hline
Use cases (\texttt{spmex1} executable) \\
\hline
\hspace{1em} \texttt{spmex1('debug',A)} displays internal details, following the C-language \acs{MEX} \acs{API}, of a MATLAB sparse \texttt{double} array, \texttt{A}.  Such arrays must be \acs{2D}, with size \texttt{[M N]}, or effectively \acs{1D}, if \texttt{M} or \texttt{N} is \texttt{1}. \\
\hspace{1em} \texttt{[L,U,p,q] = spmex1('rrlu',A)} returns a sparse \ac{LU} factorization with full pivoting, where \texttt{L*U} approximates \texttt{A(p,q)}, that also reveals rank. For a sparse \texttt{double} array, \texttt{A}, of size \texttt{[M N]}, lower- and upper-triangular sparse \texttt{double} arrays, \texttt{L} and \texttt{U}, have sizes \texttt{[M M]} and \texttt{[M N]}. The main diagonal of \texttt{L} is all one and the main diagonal of \texttt{U} is all nonzero up to the rank, \texttt{r}, of \texttt{A}. All rows of \texttt{U} below row \texttt{r} are zero. Row vectors \texttt{p} and \texttt{q} permute the sequences \texttt{1:M} and \texttt{1:N}. \\
\hspace{1em} \texttt{C = spmex1('bsx',fun,A,B)} returns an operation \texttt{fun}, either \texttt{'plus'} or \verb|'+'|, \texttt{'minus'} or \verb|'-'|, \texttt{'times'} or \verb|'.*'|, \texttt{'rdivide'} or \verb|'./'|, and \texttt{'ldivide'} or \verb|'.\'|, applied outerwise over rows and entrywise over columns of the operands, \texttt{A} and \texttt{B}. It realizes a broadcasted binary operation from two sparse \ac{2D} arrays to one sparse \ac{3D} array represented as a sparse \ac{2D} array. The output argument, \texttt{C}, has size \texttt{[M*N P]} if the input arguments, \texttt{A} and \texttt{B}, have sizes \texttt{[M P]} and \texttt{[N P]}. \\
\hline
\end{tabular}
\end{table}

Main subroutine \texttt{mexFunction} is the \texttt{spmex1} entrypoint. Using C-string comparisons, the subroutine determines applicability of the function to given arguments, issuing an error message and terminating if not applicable, and dispatching control to pairs of main subroutines, \texttt{mxCheckDebug} and \texttt{mxPrintDebug}, \texttt{mxCheckRRLU} and \texttt{mxCreateRRLU}, or \texttt{mxCheckBSX} and \texttt{mxCreateBSX}, based on use case.

The \texttt{mxCheckDebug} and \texttt{mxCheckRRLU} subroutines each verify that the second argument, \texttt{A}, is a sparse \texttt{double} array. With \texttt{mxCheckDebug}, there must be no output arguments. The \texttt{mxCheckBSX} subroutine verifies that the second to fourth arguments, \texttt{fun}, \texttt{A}, and \texttt{B}, are supported character vectors or sparse \texttt{double} arrays, following Table~\ref{tab:spmex1}. It also verifies that \texttt{A} and \texttt{B} have compatible dimension sizes for the \ac{BSX} operation, meaning the same number of columns.

The \texttt{mxPrintDebug} subroutine lists heap array entries of a given sparse \texttt{double} array, represented as an \texttt{mxArray} structure. A sparse \texttt{mxArray} stores row and column dimension sizes, \texttt{mrow} and \texttt{ncol}, pointers to heap arrays, \texttt{ir}, \texttt{nz}, and \texttt{jc}, and a maximum length, \texttt{nzm}, of two arrays, \texttt{ir} and \texttt{nz}. One integer array, \texttt{ir}, stores row index values contiguously by column. A floating-point array, \texttt{nz}, stores nonzero values contiguously by column. The third array, \texttt{jc}, has a length equal to \texttt{ncol} plus one, regardless of the number of nonzeros, \texttt{jc[ncol]}. This integer array, \texttt{jc}, stores a cumulative sum of the number of nonzeros per column.

Main subroutine \texttt{mxCreateRRLU} creates the four output arguments, \texttt{L}, \texttt{U}, \texttt{p}, and \texttt{q}, of the \texttt{'rrlu'} use case. It allocates heap memory for these arrays, as required. Consider the math model of \ac{LU} factorization, at first without pivoting, where matrices $\mathbf{L}$ and $\mathbf{U}$ correspond to \texttt{spmex1} arrays \texttt{L} and \texttt{U}:
\begin{align}
\underbrace{\bracket{1 & \mathbf{0} \\ \mathbf{l}_{21} & \mathbf{L}_{22}}}_{\mathbf{L}^{M \times M}} \underbrace{\bracket{a_{11} & \mathbf{a}_{12} \\ \mathbf{0} & \mathbf{U}_{22}}}_{\mathbf{U}^{M \times N}} &= \underbrace{\bracket{a_{11} & \mathbf{a}_{12} \\ \mathbf{a}_{21} & \mathbf{A}_{22}}}_{\mathbf{A}^{M \times N}} \text{.}
\end{align}
The first row of $\mathbf{U}$ equates to the first row of $\mathbf{A}$ and the first column of $\mathbf{L}$ depends piecewise on the first column of $\mathbf{A}$:
\begin{align}
\mathbf{l}_{21} &= \begin{cases}
\mathbf{a}_{21} / a_{11}, & a_{11} \ne 0 \text{,} \\
\mathbf{0}, & a_{11} = 0 \land \mathbf{a}_{21} = \mathbf{0} \text{.}
\end{cases}
\end{align}

Partial pivoting, required if $a_{11} = 0$ with $\mathbf{a}_{21} \ne \mathbf{0}$, swaps a lower row with the first row. A nonzero must exist somewhere in the first column. Considering round-off, partial pivoting done as a first step always ensures that, after pivoting, $a_{11}$ has the largest possible magnitude in the column. At some cost of complexity, full pivoting manages round-off even better. Full pivoting swaps a lower row with the first row and a right column with the first column so that, after pivoting, the pivot $a_{11}$ has the largest possible magnitude in the array.

Iterating the factorization with full pivoting yields submatrices, $\mathbf{L}_{22}$ and $\mathbf{U}_{22}$, from an updated submatrix, $\mathbf{A}_{22}'$:
\begin{align}
\mathbf{L}_{22} \mathbf{U}_{22} &= \underbrace{\mathbf{A}_{22} + \Delta\mathbf{A}_{22}}_{\mathbf{A}_{22}'} \text{,} \\
\Delta\mathbf{A}_{22} &= -\mathbf{l}_{21} \mathbf{a}_{12} \text{.}
\end{align}
The \texttt{spmex1} executable avoids recursion and does not create smaller matrices, represented by smaller \texttt{mxArrays}, either. Instead, it initializes and increments integer variables, \texttt{irow} and \texttt{jcol}, to indicate starting rows and columns of submatrices within preexisting, i.e., preallocated, \texttt{mxArrays}.

When computing a column of \texttt{U}, intermediate computations of columns to its right happens in preparation for subsequent final computation of columns to the right. To do this efficiently, three \texttt{mxArrays}, \texttt{U1}, \texttt{U2}, and \texttt{DA}, work with a boolean flag, \texttt{isU1}, that means \texttt{U1} is \texttt{U} when the flag is \texttt{true}, and \texttt{U2} is \texttt{U} otherwise. Thus, the \texttt{spmex1} design allocates two extra \texttt{mxArray}s that it frees before returning. Initially, \texttt{U1} and \texttt{U2} are created as duplicates of the given \texttt{mxArray}, \texttt{A}.

The \texttt{mxGetPiv} subroutine of the compute group does not alter a given sparse \texttt{mxArray}. It returns by reference the row and column index values, \texttt{i} and \texttt{j}, of the nonzero value having the largest magnitude. Where \texttt{irow} and \texttt{jcol} are input arguments, \texttt{mxGetPiv} ignores columns left of \texttt{jcol} and rows above \texttt{irow}. In the possible scenario where there is no nonzero meeting the requirements, \texttt{mxGetPiv} returns \texttt{false}. Otherwise, it returns \texttt{true}. The subroutine works columnwise, starting from column \texttt{jcol}. For each column examined, \texttt{mxGetPiv} skips rows above row \texttt{irow}.

Rearrange subroutines \texttt{mxMoveRow} and \texttt{mxMoveCol} perform full pivoting on a sparse \texttt{mxArray} passed by reference. Designed for submatrix operation, \texttt{mxMoveRow} ignores all columns left of column \texttt{j}, an argument. It moves row \texttt{i}, provided \texttt{i > j}, to row \texttt{j} and moves in-between rows down one row. This happens columnwise. Similarly, \texttt{mxMoveCol} ignores columns left of column \texttt{i}, where \texttt{i} is an argument. To insert column \texttt{j}, where \texttt{j > i}, at column \texttt{i} the in-between columns must be moved right. The subroutines move large contiguous \texttt{ir} and \texttt{nz} blocks with few standard C \texttt{memmove} and \texttt{memcopy} statements. Whereas \texttt{mxMoveRow} requires no alteration to the \texttt{jc} array, \texttt{mxMoveCol} performs simple updates to the number of nonzeros per column.

Though \texttt{mxMoveRow} copies single \texttt{ir} and \texttt{nz} values to stack memory temporarily, \texttt{mxMoveCol} requires enough heap memory, immediately after the actual number of nonzeros, to do so for an entire column. It invokes another rearrange subroutine, \texttt{mxSetNzmin}. This subroutine ensures that the storage available for nonzero and corresponding row index values is at least \texttt{nzmin}, an argument. If it is less, the available storage is doubled. Reallocation usually increases but may decrease the maximum length of the \texttt{ir} and \texttt{nz} arrays while also copying relevant old row index and nonzero values to the new arrays and, afterwards, freeing the old arrays.

Due to its initial value, \texttt{A}, the first row of \texttt{U}, initially \texttt{U1}, is correct. The first iteration of \texttt{mxCreateRRLU} computes the first column of \texttt{L} via the \texttt{mxSetLCol} subroutine of the compute group. The iteration computes and stores the submatrix $\Delta\mathbf{A}_{22}$ in \texttt{DA}, whose dimension sizes never change, via the \texttt{mxSetDelA} subroutine. Via the \texttt{mxUseDelA} subroutine, the iteration zeros the first column of \texttt{U2}, below the first row, and adds \texttt{U1} to \texttt{DA} at the submatrix level, storing the result in \texttt{U2}. The iteration completes by flipping \texttt{isU1} to \texttt{false}.

Because of pivoting of \texttt{U1}, entries in the working row, \texttt{irow}, have to be copied from \texttt{U1} to \texttt{U2} from the working column, \texttt{jcol}, and to the right. This is done columnwise along with additions of \texttt{U1} and \texttt{DA} from just below the working row and just right of the working column, \texttt{irow+1} and \texttt{jcol+1}, to the last row and last col. There is already memory, perhaps not enough, in the \texttt{ir} and \texttt{nz} arrays of \texttt{U2} for the results. Invocation of \texttt{mxSetNzmin}, with a simply calculated upper bound, guarantees there will be enough.

Second and subsequent iterations proceed similarly. The memory \texttt{DA} requires for the \texttt{jc} array never changes. Memory available for its \texttt{ir} and \texttt{nz} array may occassionally double through reallocation, a task managed by the \texttt{mxSetNzmin} subroutine. At the end of iteration \texttt{ncol}, \texttt{U1} is \texttt{U} if \texttt{isU1} equals \texttt{true} and \texttt{U2} is \texttt{U} otherwise. Iterations may terminate earlier via a \texttt{break} statement that actions if \texttt{mxGetPiv} returns \texttt{false}, meaning that the next submatrix, $\mathbf{A}_{22}'$, for which to compute an \ac{LU} factorization is all zero.

When pivoting, \texttt{mxMoveRow} has to be applied to \texttt{L} because of potential nonzero values below the working row to the left of the working column. A compute subroutine, \texttt{mxFinishL}, completes the main diagonal upon early termination.

To create initial \texttt{p} and \texttt{q} vectors, \texttt{mxCreateRRLU} invokes \texttt{mxCreateSeq} twice. This compute subroutine returns a \texttt{double} and dense \texttt{mxArray} having one row and \texttt{N} columns, with \texttt{N} an unsigned integer argument. Entries are integers from \texttt{1} to \texttt{N} in sequence. Each iteration, \texttt{mxCreateRRLU} records pivot choices by permuting \texttt{p} and \texttt{q} via two invocations of \texttt{mxMoveVal}. This rearrange subroutine processes a real and dense \texttt{mxArray} vector argument, like \texttt{p} or \texttt{q}. If \texttt{j > i}, where \texttt{i} and \texttt{j} are also arguments, the subroutine inserts the value from position \texttt{j} at position \texttt{i}, after moving in-between values to the right by one position to vacate space.

To realize the \texttt{'bsx'} use case in Table~\ref{tab:spmex1}, main subroutine \texttt{mxCreateBSX} invokes a compute subroutine, \texttt{mxPlusLike} or \texttt{mxTimesLike}, with pre- and post-processing. The use case follows MATLAB \ac{BSX} functionality of dense \texttt{double} arrays. Consider two \ac{3D} arrays, \texttt{A} and \texttt{B}, that have compatible sizes, \texttt{[M 1 P]} and \texttt{[1 N P]}, for \iac{BSX} operation, \texttt{bsxfun(fun,A,B)}, where \texttt{fun} specifies an operation like \texttt{@plus} or \texttt{@times}. The result dimension size, \texttt{[M N P]}, indicates the unary expansion or broadcasting of each operand at its singleton dimension, the second of \texttt{A} and the first of \texttt{B}, with an entrywise operation over the third dimension.

Given \texttt{permute} and \texttt{reshape}, a \ac{BSX} operation naturally requires \ac{3D} arrays but MATLAB does not have sparse \ac{3D} arrays. The \texttt{sparse1} class adds sparse \ac{MDA} objects to MATLAB. With it, operations like \texttt{A+B} or \texttt{A.*B}, where \texttt{A} and \texttt{B} have dimension sizes \texttt{[M 1 P]} and \texttt{[1 N P]}, become possible. Regardless of the original class of both operands, an initial cast ensures both going forward are \texttt{sparse1} objects, not just one as required. Internally, a \texttt{sparse1} object stores a sparse \ac{1D} array along with \ac{MDA} dimension sizes.

Using \texttt{permute}, \texttt{reshape}, \texttt{find}, \texttt{ind2sub}, \texttt{sparse}, and other functions, the \texttt{sparse1} class turns \ac{BSX} operands into sparse \ac{2D} arrays of sizes \texttt{[M leP]} and \texttt{[N leP]}, where \texttt{leP <= P}. Prior to creating the arrays, the class compresses subscripts in coordination for dimensions with common sizes, the reason for \texttt{leP}. As MATLAB demands space, and hence time, proportional to the number of columns, sparse \ac{2D} arrays must have the minimum such number for efficiency.

In the \texttt{sparse1} class, a \texttt{try}-\texttt{catch} approach offers two paths to realize all \ac{BSX} operations. An \texttt{spmex1} invocation performs the task in the \texttt{try} path, where possible, followed by fill-in as required. An easily computed scalar, the fill-in value equals the result of applying the relevant MATLAB function to a pair of zero scalars. Because vectorized MATLAB statements perform the fill-in, \texttt{mxCreateBSX} requires two subroutines as follows: \texttt{mxTimesLike}, where the nonzero pattern in the result equals at worst the intersection of the nonzero patterns of the broadcasted operands; and \texttt{mxPlusLike}, where the nonzero pattern in the result equals at worst the union of the nonzero patterns of the broadcasted operands.

In the \texttt{try} path, \texttt{spmex1} uses compute-bound C pointers and \texttt{for} loops in lieu of memory-bound \ac{USX} operations, what the \texttt{catch} path does with \texttt{repmat} and \texttt{repelem}. Moreover, \texttt{mxCreateBSX} calculates an upper bound for the number of nonzeros in the result and preallocates sufficient storage for it before invoking \texttt{mxTimesLike} or \texttt{mxPlusLike}, neither of which get involved in memory management. After \texttt{spmex1} invocation, checking the binary operation and comparing the number of nonzeros in the result to the number of elements determines, in the \texttt{try} path, the need for fill-in.

Regardless of which path the \ac{BSX} operation takes, \texttt{try} alone or a quick \texttt{try} followed by \texttt{catch}, the operation completes by producing and returning a \texttt{sparse1} object of the correct dimension sizes. A step before the operation begins precomputes these dimension sizes from the original \ac{MDA} sizes. Because of the \texttt{try}-\texttt{catch} approach, \texttt{sparse1} will work for all supported binary operations even when an end-user fails to compile the supplied \texttt{spmex1} C-file.


\section{Conclusions}
\label{sec:conclusions}

This paper identified two main approaches to numerical algebraic geometry. The first, continuation, does not concern tensor notation and/or algorithms. The second, Macaulay-\ac{CPD}, employs the \ac{CPD} formalism and algorithms to construct it, a tensor decomposition. Though tensors best associate with Ricci notation, \ac{CPD} literature uses an n-mode notation. Both numerical approaches solve for all solutions to a polynomial system without attempting to create a triangular system. Moreover, the \ac{CPD} one computes vectors that have to be invalidated as they are not actually part of the solution or zero set. The \ac{GB} approach to symbolic algebraic geometry, in contrast, produces a triangular-like system that ensures zero-set invariance.

Proving that a triangularization so defined guarantees zero-set invariance, this paper introduces the \ac{Qr} factorization using the \ac{RT} algebra, a tensor algebra with dual-variant index notation. Although it derives from the Ricci notation of differential geometry, previous disclosures about the \ac{RT} algebra concern tensors as numerical objects. The \ac{Qr} formalism, a generalization of the \ac{QR} formalism of linear algebra, generalizes unitary transforms to the algebraic geometry, possibly curved spaces, defined by zero sets of polynomial systems. This tensor theory, notationally compatible, must not be confused with the curved space theory of tensor calculus. Similar to the \ac{CPD} approach, a preliminary algorithm for \ac{Qr} factorization employs a null-space basis that, however, is not the null space of a Macaulay matrix but is a basis that guarantees triangularization.

More important than an iterative part of the proposed \ac{Qr} factorization, an optimization to minimize \iac{SSE} over unitary identities, this work contributed open-source tensor software, the RTToolbox, that can be used to investigate and develop improved algorithms. The paper elaborates on a new RTToolbox release, \ac{R2026}, having methods to construct the proposed \ac{Qr} and related formalisms. A sparse \ac{MDA} class, \texttt{sparse1}, collaborates with preexisting and augmented classes, \texttt{index} and \texttt{tensor}, to define sparse \texttt{tensor} objects, which enable a variety of unary, binary, and $N$-ary operations via a dual-variant \texttt{index} notation. Key operations, a unary null-space calculation and a variety of broadcasted binary operations, that were difficult to vectorize in MATLAB were instead dispatched to an open-source executable, \texttt{spmex1}, developed with pointers and loops in C. The class and the executable together represent and manipulate sparse \acp{MDA} using \ac{1D} and sometimes compressed \ac{2D} sparse arrays of MATLAB.

The paper includes two demos, a box dimensions and a robotic arm problem, chosen for their relative simplicity just to explain key concepts of the proposed \ac{RT} framework. For the box dimensions, the \ac{Qr} factorization yields numerically and without iteration what is effectively a reduced \ac{GB} triangularization. For the two-segment arm, given difficulties faced by the optimization part of the proposed algorithm, the demo includes a handcrafted pair of unitary transforms that prove the existence of a sparser \ac{Qr} factorization, which also satisfies required identities accurately. Thanks to the \texttt{sparse1} and \texttt{spmex1} designs, the paper demonstrates acceleration of key steps, like null space calculation, in comparison to an alternative path using MATLAB's own \texttt{null} function.


\section*{Acknowledgements}
\label{sec:acknowledgements}

The author gratefully acknowledges Adam P.\ Harrison and Dan Sirbu for helping get this research into a format suitable for sharing with university and industry representatives via the MathWorks Research Summit, 2026, held in Natick, MA.


\bibliography{RTFgeometry}
\bibliographystyle{spiejour}

\end{document}